\documentclass[final,onefignum,onetabnum]{siuro210301}

\usepackage{lipsum}
\usepackage{amsfonts}
\usepackage{graphicx}
\usepackage{epstopdf}
\usepackage{algorithmic}
\ifpdf
  \DeclareGraphicsExtensions{.eps,.pdf,.png,.jpg}
\else
  \DeclareGraphicsExtensions{.eps}
\fi

\usepackage{enumitem}
\setlist[enumerate]{leftmargin=.5in}
\setlist[itemize]{leftmargin=.5in}


\newsiamremark{remark}{Remark}
\newsiamremark{hypothesis}{Hypothesis}
\crefname{hypothesis}{Hypothesis}{Hypotheses}
\newsiamthm{claim}{Claim}

\newcommand{\Hb}{\mathbf{H}}
\newcommand{\Cbb}{\mathbb C}
\newcommand{\Zbb}{\mathbb Z}
\newcommand{\Rbb}{\mathbb R}

\newcommand{\rank}{{\rm rank}}

\newcommand{\bmat}{\begin{bmatrix}}
\newcommand{\emat}{\end{bmatrix}}



\title{Spectral Factorization of Rank-Deficient Rational Densities 
\thanks{Submitted to the editors on Jan 13rd, 2023.}}

\author{Wenqi Cao\thanks{Department of Automation, Shanghai Jiao Tong University, Shanghai, China 
  (\email{wenqicao@sjtu.edu.cn}, \email{wnqcao@foxmail.com}).}
\and Anders Lindquist\thanks{Department of Automation and School of Mathematical Sciences, Shanghai Jiao Tong University, Shanghai, China. 
  (\email{alq@math.kth.se}).}
}

\usepackage{amsopn}





\begin{document}

\maketitle

\begin{abstract}
Though there are hundreds of papers on rational spectral factorization, most of them are concerned with full-rank spectral densities. In this paper we
propose a novel approach for spectral factorization of a rank-deficient spectral density, leading  to a minimum-phase full-rank spectral factor, in both the discrete-time and continuous-time cases. Compared with several  approaches to low-rank spectral factorization, our approach exploits a deterministic relation inside the factor, leading to  high computational efficiency. In addition, we show that this method is easily used in identification of low-rank processes and in Wiener filtering.
\end{abstract}

\begin{keywords}
spectral factorization, low-rank process, rank-deficient rational spectral densities, feedback representation 
\end{keywords}

\begin{MSCcodes}
{15A23, 46E20, 93E11, 93E12}
\end{MSCcodes}

\section{Introduction}
In this paper we consider spectral factorization of rational spectral densities of low rank, a topic for which there is a severe lack of computational methods compared with the situation for spectral factorization of polynomials or full-rank matrix densities.

Rational spectral densities often appear in second-order linear stochastic systems. Processes with a rank-deficient spectral density, named rank-deficiency processes or low-rank processes \cite{CLPtac21,CPLauto21}, may appear in dynamic networks 
where there are interconnections between the nodes \cite{WVD18-2,BGHP-17,CPcdc22} and play an important role in singular autoregressive (AR) models \cite{Deistler19,DeistlerEJC} as well as dynamic factor models \cite{FP17,Ferrante-20cdc}.
These system representations have recently attracted a lot of attention, especially in the large-scale cases,
in a broad range of areas, such as stochastic 
control \cite{ADCF12,LMP95}, macroeconomics \cite{LDAsurvey22}, engineering \cite{TR06,LSTB19}, biology and neuroscience \cite{nwbio09,Yuan-11}.
The need to calculate full-rank minimum-phase spectral factors thus increases rapidly, as they are used as the transfer function for a latent variable system, an innovation model, or similar.

The starting point of this research comes from our paper \cite{CPLauto21} on the identification of low-rank processes, where a full-column-rank minimum-phase factor is required for an innovation model. However for low-rank processes (see, e.g., \cite{CLPtac21}\cite{DeistlerEJC}), a classical identification approach like prediction-error methods (PEM, \cite{Ljung}) cannot be used directly because of the rank-deficient property.
In \cite{CPLauto21} we use a special feedback structure for low-rank systems to simplify  identification, and find that a minimum-phase full-rank spectral factor can be recovered under some restrictions, by means of a right-coprime factorization with an inner factor. This was not fully discussed in previous papers, but we found that our approach can be used to solve spectral factorization problem of rank-deficient densities analytically.

So far, there has been no less than hundreds of publications on the computation of spectral factors both in continuous-time and discrete-time cases; for references see, e.g., \cite{FG98,SK01}.
Classical methods based on matrix factorizations or filtering, like the Bauer method, the Levinson-Durbin algorithm, the Schur algorithm and the Riccati equation, can be found in \cite{Youla78,CG98,SK01,JLE11,ESS18,Ferrante05}.
For methods based on interpolation see, e.g., \cite{GK87,GK89}.
There are also several approaches based on the ideas underlying subspace identification \cite{VDDS97,LS19}.
Though the plethora of approaches can provide numerically efficient spectral factorization with sound properties, they typically require a common restrictive assumption of a positive definite spectral density, with the result that the methods cannot be applied to rank-deficient spectra.
It is worth mentioning that some so-called sparse spectral factorization methods \cite{11Sparse,13Sparse} consider  scalar polynomial spectral factorization with many zero data in signal processing and communications, which is different from rank-deficient spectral factorization.

To our best knowledge, only \cite{WM97,Oara00,Oara05,GCB06} propose computation methods for rank-deficient spectral factorizations to a full-rank minimum-phase factor.
Among them, \cite{WM97,GCB06} use (generalized) algebraic Riccati equations (ARE) to realize the factorizations iteratively. In the singular case, the inverse of a spectral density is not proper, resulting in the infeasibility of directly using the classical ARE methods.
Hence in \cite{WM97}, a full-rank descriptor form of the original spectra is used instead for ARE, which is of a higher dimension with some system pencils repeated on the diagonal of the realization denominator. Similarly, paper \cite{GCB06} overcomes the infeasibility by solving two different ARE at the same time, also increasing the calculation cost by multiples.
Worse still, as the dimension of the spectra increases considerably in large-scale problems, a huge amount of increased computation is required if choosing such schemes.

The paper \cite[Section V. B]{Oara00} introduces an approach to general spectral factorization (including low-rank spectral factorization) to obtain a full-rank minimum-phase factor in continuous time; for the same approach for the discrete-time case see \cite{Oara05}.
They solve the spectral factorization problem by first applying a left-coprime factorization with the same dimension as the spectral dimension, and then a general inner-outer factorization, without increasing the order of the equations to be solved like in the above approaches.

In this paper, we shall propose a novel coprime factorization-based rank-deficient spectral factorization approach to obtain full-rank minimum-phase factors.
Our main contributions are as follows.
Compared to the approaches in \cite{Oara00,Oara05}, our approach provides a coprime factorization with a lower order, and does not require a post-processing inner-outer factorization, leading to a higher computation efficiency.
Specifically, we extract the deterministic relation inside a low-rank spectral density, and use it to construct a coprime factorization problem, from which the set of all analytical minimum-phase spectral factors can be obtained after a matrix multiplication.
By using the deterministic relation, the factorization problem and hence the equation to be solved has a much lower dimension than for all existing methods above, and this makes it more efficient.
In addition, the identification of singular processes, and the Wiener filter between the sub-processes, are discussed through the spectral factorization results. These preliminary application results may provide insight into the further generalization of applications in stochastic control and filtering (see, e.g., \cite{FP19,Ferrante-20cdc,GCB06}) in the singular and large-scale cases.

The structure of this paper is as follows. The preliminaries for our approach are introduced in \cref{secPre}.  In particular, we investigate deterministic relations in tall spectral factors, consider the related problems of factorization, and establish the uniqueness of minimum-phase spectral factors.
The main observation behind our novel approach is described at the beginning of \cref{secSP}. Moreover, in \cref{secSP} our novel low-rank spectral factorization approach is explained in the discrete-time case, first focusing on a special case without operating the coprime factorization before describing the general solution. In \cref{secIden} we show the convenience of our approach in the application of identifying low-rank processes,  both for an innovation model, and for a canonical internal feedback structure with a Wiener predictor.
Section~\ref{secSPct} introduces low-rank spectral factorization in the continuous-time case. In \cref{secExamples}, numerical examples are given in discrete time with applications, as well as in continuous time.
Finally, the conclusions are given in \cref{secCon}.

\section{Preliminaries}\label{secPre}
We shall show in the next sections that, given the deterministic relation inside a low rank spectral density, the minimum-phase spectral factorization problem is solvable by a general coprime factorization procedure with an inner function. This was roughly discussed, but not dwelt on, in our previous work \cite{CPLauto21}.
In this section the preliminaries for solving rank-deficient spectral factorization in this paper are introduced.

\subsection{Deterministic relation in a tall spectral factor} 
Let $\Phi(z)$ be an $(m+p)\times (m+p)$ spectral density of rank $m$ in discrete time. 
By rearranging rows and columns, it can be partitioned as
\begin{equation}\label{Phi}
    \Phi(z)=\begin{bmatrix}
              \Phi_{11}(z) & \Phi_{12}(z) \\
              \Phi_{21}(z) & \Phi_{22}(z)
            \end{bmatrix},
\end{equation}
where $\Phi_{11}$ is $m\times m$ and full-rank, and $\Phi_{12}=\Phi_{21}^\top$.
It is well known that there exists an $(m+p) \times m$ full-column-rank (henceforth called merely full-rank) stable spectral factor of $\Phi(z)$ \cite[Remark 4.2.3]{LPbook},
\begin{equation}\label{W1W2}
    W(z)=\begin{bmatrix}
           W_1(z) \\
           W_2(z)
         \end{bmatrix},
\end{equation}
with $W_1(z)$ an $m \times m$ full-rank matrix, such that
\begin{equation}\label{Phifac}
    \Phi(z)=W(z)W(z)^*,
\end{equation}
where 
$W(z)^*=W(\bar{z}^{-1})^\top$ denotes the conjugate transpose, $m>0, p\geq 0$, and $m,p\in \Zbb$.

Our problem is how to extract a tall minimum-phase full-rank $W(z)$ from $\Phi(z)$ in \eqref{Phi}. Note that  spectral factorization of rational matrices are also studied in the dual case where $\Phi(z)=W^*(z)W(z)$. We choose the form \eqref{Phifac} because it is the natural factorization associated to the representation of second-order stationary stochastic processes and hence to filtering and estimation problems.

A deterministic relation between $W_1$ and $W_2$ was first proposed in \cite{GLsampling} and extracted from $\Phi(z)$ in \cite{CLPtac21,CPLauto21} with some conditions. Here we generalize the previous theorems to one without restrictions. The result also applies to the continuous-time case, and will be further used to calculate a minimum-phase full-rank $W$.

\begin{theorem}\label{lem1}
Suppose $W(z)$ is a full-column-rank spectral factor of $\Phi(z)$ in \eqref{Phi}, with partition \eqref{W1W2}, where $W_1$ is $m\times m$ and full-rank. Then there is a unique deterministic relation between $W_1$ and $W_2$, not affected by the particular choice of $W(z)$, namely
\begin{equation}\label{W1HW2}
  W_2(z)=H(z)W_1(z),
\end{equation}
where
\begin{equation}\label{H}
    H(z)=\Phi_{21}(z)\Phi_{11}(z)^{-1}.
\end{equation}
\end{theorem}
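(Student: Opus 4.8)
The plan is to read off the relation directly from the factorization identity $\Phi=WW^*$ by comparing blocks. First I would partition $W^*=\begin{bmatrix} W_1^* & W_2^*\end{bmatrix}$ conformally with \eqref{W1W2} and expand
\[
\begin{bmatrix} \Phi_{11} & \Phi_{12} \\ \Phi_{21} & \Phi_{22}\end{bmatrix}
= \begin{bmatrix} W_1 \\ W_2\end{bmatrix}\begin{bmatrix} W_1^* & W_2^*\end{bmatrix}
= \begin{bmatrix} W_1W_1^* & W_1W_2^* \\ W_2W_1^* & W_2W_2^*\end{bmatrix},
\]
from which I only need the two identities $\Phi_{11}=W_1W_1^*$ and $\Phi_{21}=W_2W_1^*$.

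The next step is to secure the invertibilities on which the argument rests. Because $W_1$ is a square $m\times m$ rational matrix of full normal rank, $\det W_1\not\equiv 0$, so $W_1$ and hence $W_1^*$ are invertible as rational matrix functions; it follows that $\Phi_{11}=W_1W_1^*$ is invertible and that $H=\Phi_{21}\Phi_{11}^{-1}$ in \eqref{H} is well defined. Treating all equalities as identities of rational matrices (valid wherever the entries are defined), the relation \eqref{W1HW2} is then immediate:
\[
H W_1 = \Phi_{21}\Phi_{11}^{-1}W_1 = W_2 W_1^* (W_1^*)^{-1} W_1^{-1} W_1 = W_2 .
\]

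For the assertion that this relation is independent of the particular factor $W$, I would note that $H=\Phi_{21}\Phi_{11}^{-1}$ is constructed from the blocks of $\Phi$ alone and therefore cannot depend on the choice of $W$; equivalently, any other full-rank stable factor has the form $WU$ with $U$ inner, and right multiplication by $U$ cancels in $W_2U=(HW_1)U=H(W_1U)$. I do not anticipate a deep difficulty here; the one point demanding care is that the full-rank hypothesis on $W_1$ be used to justify the rational invertibility of $W_1$ and $\Phi_{11}$, and that the manipulations be kept at the level of genuine rational-function identities, so that \eqref{W1HW2} holds as a true deterministic relation rather than merely almost everywhere.
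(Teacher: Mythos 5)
Your proposal is correct and follows essentially the same route as the paper's own proof: extract $\Phi_{11}=W_1W_1^*$ and $\Phi_{21}=W_2W_1^*$ from the block factorization, invert using the full rank of $W_1$, and observe that $H=\Phi_{21}\Phi_{11}^{-1}$ depends only on $\Phi$, hence not on the particular factor $W$. Your treatment is slightly more explicit about the rational invertibility of $W_1$ and $\Phi_{11}$, which the paper states more briefly, but the substance is identical.
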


\begin{proof}
From \eqref{Phi} and \eqref{W1W2}, we have
$$
    \Phi_{11}=W_1W_1^*,~\Phi_{21}=W_2W_1^*.
$$
Since $W_1$ is full-rank, $\Phi_{11}$ is full-rank. Hence
$$
    W_2W_1^{-1}=\Phi_{21}\Phi_{11}^{-1},
$$
which leads to \eqref{W1HW2} and \eqref{H}.
Since the spectral density is unique for any process, the value of $H(z)$ is not affected by the particular choice of $W(z)$.
\end{proof}

\subsection{General left-coprime factorization with an inner factor}\label{subsec_copfact}
In the following, we shall give a corollary of \cite[Theorem 6.2]{Oara99} specialized to the left-coprime factorizations with an inner factor of the minimal degree in discrete time. And we will show in \cref{secSP} that the coprime factorization in our problem will only lead to a minimal degree solution.

We shall need the following notation for a realization
\begin{equation}\label{symb}
    \left[\begin{array}{c|c}A &B\\\hline C&D\end{array}\right]:= C(z I-A)^{-1}B +D.
\end{equation}

\begin{corollary}\label{thmCpFactdt}
Given an arbitrary rational matrix $T(z)$ with a minimal realization
\begin{equation}\label{minRlz}
    T(z)=\left[\begin{array}{cc|c}A_u & A_{us} & B_u \\ 0 & A_s & B_s\\
    \hline C_u & C_s&D\end{array}\right],
\end{equation}
where the eigenvalues of $A_u$, $A_s$ respectively correspond to the unstable (i.e., in the exterior of the closed unit disk
containing the infinity) and stable (i.e., in the closed unit disk) poles of $T(z)$.
Let $n_u$ be the number of unstable poles of $T(z)$.
Then the left-coprime factorization with an inner denominator with respect to the unit circle has a solution
\begin{equation}
T(z)=T_D(z)^{-1}T_N(z),
\end{equation}
of minimal degree $n_u$ if and only if the Stein equation
\begin{equation}\label{Steindt}
    X-A_u^*XA_u-C_u^*C_u=0
\end{equation}
has an invertible Hermitian solution X. In this case, the class of all solutions is given
by
\begin{subequations}
\begin{equation}
    T_N(z)=\left[\begin{array}{cc|c}
     A_u+R_u & A_{us}+R_s  & B_u+R_D\\
     0 & A_s & B_s\\ \hline
     PC_u & PC_s & PD \end{array}\right],
\end{equation}
\begin{equation}
    T_D(z)=\left[\begin{array}{c|c}
       A_u+R_u & (1-z)M \\
      \hline PC_u & P \end{array}\right],
\end{equation}
\end{subequations}
where
\begin{subequations}\label{RM}
\begin{equation}
    R_s=(1-z)MC_s,
\end{equation}
\begin{equation}
    R_u=(1-z)MC_u,
\end{equation}
\begin{equation}
    R_D=(1-z)MD,
\end{equation}
\begin{equation}
M=-X^{-1}(I-A_u)^{-*}C_u^*,
\end{equation}
and $P$ is an arbitrary unitary matrix.
\end{subequations}
\end{corollary}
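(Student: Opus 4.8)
The plan is to obtain the statement as a specialization of \cite[Theorem 6.2]{Oara99} to the discrete-time setting in which the prescribed stability region is the closed unit disk and the inner denominator is required to absorb exactly the $n_u$ unstable poles carried by $A_u$. That general theorem yields, relative to a boundary, a coprime factorization $T(z)=T_D(z)^{-1}T_N(z)$ with $T_D$ inner, together with an explicit realization driven by the solution of an associated algebraic equation. The work is therefore twofold: (i) identify the specialized algebraic equation together with its solvability/minimality condition, and (ii) check that the construction of \cite{Oara99} collapses to \eqref{RM} and to the stated realizations of $T_N$ and $T_D$. First I would record the structural reduction: since only the poles of $A_u$ must be reflected, the inner denominator should depend on $(A_u,C_u)$ alone, while the stable subsystem $(A_s,B_s,C_s)$ passes through unchanged up to left multiplication. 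This is exactly why $T_D$ carries state dimension $n_u$ and why the $A$-matrix of $T_N$ keeps the original block form $\begin{bmatrix} A_u+R_u & A_{us}+R_s \\ 0 & A_s\end{bmatrix}$ with the stable block untouched; I would make it precise by forming $T_N:=T_DT$ and verifying, through the realization calculus \eqref{symb}, that the unstable modes of $T$ cancel so that $T_N$ is analytic in the exterior of the unit disk.

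The analytic heart is pinning down when an inner $T_D$ of degree exactly $n_u$ exists. Specializing the region equation of \cite{Oara99} to the unit circle gives the Stein equation \eqref{Steindt}, i.e. $X=A_u^*XA_u+C_u^*C_u$. Here one exploits that every eigenvalue of $A_u$ lies outside the closed unit disk, so $\bar\lambda_i\lambda_j\neq 1$ for all eigenvalue pairs; consequently the Stein operator $X\mapsto X-A_u^*XA_u$ is invertible, \eqref{Steindt} has a unique solution, and that solution is automatically Hermitian because the datum $C_u^*C_u$ is Hermitian. Thus existence of $X$ is free, and the only genuine hypothesis is its invertibility. Note also that $1$ is not an eigenvalue of $A_u$, so $(I-A_u)$ is invertible and $M=-X^{-1}(I-A_u)^{-*}C_u^*$ is well defined precisely when $X$ is invertible; this is the normalization of the inner factor at the boundary point $z=1$, which is what produces the factor $(1-z)$ throughout \eqref{RM}.

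I would then verify innerness directly. Writing the descriptor pencil $zI-(A_u+R_u)=z(I+MC_u)-(A_u+MC_u)$ and using that $P$ is unitary, it suffices to check that the scalar-normalized block is all-pass, $T_D(z)^*T_D(z)=I$ on $|z|=1$, which I would do by substituting \eqref{RM} and collapsing the cross terms by means of the Stein identity \eqref{Steindt}. The arbitrary unitary $P$ then accounts for the intrinsic non-uniqueness of an inner factor up to a constant unitary, producing the full class of solutions. The reciprocal identity $T_D^{-1}T_N=T$ follows from the construction $T_N=T_DT$, after confirming the pole–zero cancellation that reduces the product realization to the stated $(n_u+n_s)$-dimensional form.

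The step I expect to be the main obstacle is exactly this innerness verification: showing that the $z$-affine (descriptor) realization of $T_D$ is genuinely all-pass and reconstructs $T$, since this is where the Stein relation \eqref{Steindt} and the precise choice of $M$ must conspire term by term. The coprimeness and the minimality claim then follow from the general theory: any inner left factor clearing the unstable poles has degree at least $n_u$, and degree equal to $n_u$ forces the denominator dynamics to be a reflection of $A_u$, which is realizable exactly when $X$ is invertible—this delivers the ``only if'' direction. I would close by matching indices and signs so that the present formulas coincide with the specialized output of \cite[Theorem 6.2]{Oara99}.
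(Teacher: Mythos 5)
Your plan is essentially the paper's own argument: the paper proves this corollary simply by citing \cite[Theorem 6.2]{Oara99}, restricting the $J$ all-pass denominator there to be inner with respect to the unit circle, and specializing the matrices, which is exactly the reduction you describe (your added observations---that the Stein equation automatically has a unique Hermitian solution since the spectrum of $A_u$ lies outside the closed unit disk, so invertibility of $X$ is the only real hypothesis, and that $M$ encodes normalization of $T_D$ at $z=1$---are correct and merely fill in details the paper omits). No gap; same route.
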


The above corollary is easy to obtain by limiting the $J$ all-pass denominator to be an inner matrix in \cite[Theorem 6.2]{Oara99}, and specifying some matrices. Hence we omit the proof here. Note that our symbol \eqref{symb} of representing a realization is different from that of \cite{Oara99}.
Since we use right-coprime factorizations, matrix transpositions will be made after using the above result.
The continuous-time counterpart of this corollary is given in \cref{secSPct}.

\subsection{Remarks on calculating a minimal realization}\label{subsec_minrlz}
To solve the general coprime factorization problem as shown in \cref{subsec_copfact}, one may consider the way to obtain a minimal realization \eqref{minRlz}. Till now, the problem of calculating a minimal realization \eqref{symb} with 
matrices $A, B, C, D$ from a rational transfer function has been widely studied and led to numerous different approaches.
In this subsection, we shall give some suggestions on constructing \eqref{minRlz} given $T(z)$ with different kinds of poles. A numerical example in continuous time will be given in \cref{apdxExp}.

Considering here the matrix 
\begin{align}\label{AuAs}
    \begin{bmatrix}
        A_u & A_{us} \\ 0 & A_s
    \end{bmatrix}
\end{align}
which is block upper-triangular,
a convenient way to determine the realization is by using Gilbert realization (see, such as \cite[pp. 114-116]{Mackenrothbook}) when all the poles are distinct. Then \eqref{AuAs} will be a diagonal matrix with all poles on its diagonal. This also applies to the situation when there are complex distinct poles. 
Note that for our case, the unstable poles and the stable poles need to be put in $A_u$ and $A_s$ respectively.

When there is any pole with a multiple degree, Gilbert realization cannot be used directly anymore. However, the realization $T(z)$ can be seen as the parallel connection of different `smaller' realizations (see, such as \cite[pp. 116]{Mackenrothbook}). 
Specifically, we write $T(z)$ as a summation of products containing different poles (including their degrees), similar as when calculating Gilbert realization. 
Then the small realizations with distinct poles can be obtained as the above.
Separate the items with repeated poles, and for each repeated pole, construct its corresponding small realization with the matrix $A$ in \eqref{symb} as a Jordan block, with its diagonal the pole itself. Then the other matrices in the realizations are easily obtained. 

Though the above method is able to calculate a minimal realization in \eqref{minRlz}, there might be complex numbers in the matrices. A realization with complex matrices is not welcomed in the area of system control, for lack of physical meaning. Hence in this paper, we suggest constructing the small realizations with repeated complex poles, or poles symmetric on both sides of the real axis, in a different way from the above, such as using the standard controllable form. 

\subsection{Minimum-phase spectral factors and their uniqueness}\label{secUnique}
In the discrete-time formulation a rational spectral factor $W(z)$ is said to be {\em minimum-phase} if it has all its poles in the open unit disc and all its zeros in the closed unit disc \cite[p.137 or p.194]{LPbook}; in other words, $W(z)$ is outer. Next we shall demonstrate how established results on spectral factorization of full-rank spectral densities can be extended to the situation when the spectal factor is tall full-rank and minimum phase. More details on tall matrix zeros and minimum-phase function matrices are given in \cref{apdxA}.
 
As for uniqueness, there have been established results for full-rank factors in the dual case  $\Phi(z)=W^*W$.  In \cite{Youla61} it was proved in continuous time that a full row rank spectral factor is unique up to left multiplication by a unitary matrix, and \cite{BF16a} gave a corresponding result in discrete time.
Furthermore, \cite[Theorem 2.2]{BaggioThesis} proved such uniqueness of a full row rank minimum-phase factor in discrete time.
In the following, we shall modify these results to our case, which applies to the full-rank spectral densities as well. The proof will be given in \cref{apdxA.1}.

\begin{lemma}[{ Uniqueness of full-column-rank minimum-phase spectral factor}]\label{thmUnique}
A minimum-phase spectral factor $W(z)$ of a low rank spectral density $\Phi(z)$  always exists and is unique up to right multiplication by an arbitrary $m\times m$ constant unitary matrix.
\end{lemma}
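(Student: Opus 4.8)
The plan is to first establish existence, which follows from standard full-rank spectral factorization theory applied to a reduced problem. Since $\Phi_{11}(z)$ is an $m \times m$ full-rank rational spectral density, classical results guarantee a minimum-phase (outer) spectral factor $W_1(z)$ of $\Phi_{11}(z)$, i.e. $\Phi_{11} = W_1 W_1^*$ with $W_1$ minimum-phase. I would then set $W_2(z) = H(z)W_1(z)$ using the deterministic relation $H = \Phi_{21}\Phi_{11}^{-1}$ from \cref{lem1}, and form $W = \begin{bmatrix} W_1 \\ W_2 \end{bmatrix}$. One must verify that this $W$ is genuinely a spectral factor of the full $\Phi$: the blocks $\Phi_{11} = W_1 W_1^*$ and $\Phi_{21} = W_2 W_1^* = H W_1 W_1^* = H \Phi_{11}$ hold by construction, and $\Phi_{22} = W_2 W_2^* = H \Phi_{11} H^*$ must be checked against the positive-semidefiniteness and rank-$m$ structure of $\Phi$ (this is exactly where rank-deficiency of $\Phi$, i.e. the Schur complement $\Phi_{22} - \Phi_{21}\Phi_{11}^{-1}\Phi_{12}$ vanishing, is used). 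Finally, $W$ being minimum-phase as a tall factor should be argued from the minimum-phase property of $W_1$ together with $H$ being analytic in the appropriate region; the notion of tall-matrix zeros from \cref{apdxA} is what makes this precise.

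\textbf{Uniqueness.} For uniqueness I would reduce to the established full-rank results cited in the excerpt. Suppose $W$ and $\tilde W$ are two minimum-phase full-column-rank spectral factors of $\Phi$. By \cref{lem1}, both satisfy the same deterministic relation, so $W_2 = H W_1$ and $\tilde W_2 = H \tilde W_1$ with the \emph{same} $H$; hence it suffices to compare the top blocks $W_1$ and $\tilde W_1$. Both are minimum-phase full-rank spectral factors of the same full-rank density $\Phi_{11} = W_1 W_1^* = \tilde W_1 \tilde W_1^*$. Here I would invoke the known uniqueness theorem for full-rank minimum-phase factors (the discrete-time analogue of \cite{Youla61}, as in \cite{BF16a} and \cite[Theorem 2.2]{BaggioThesis}, suitably transposed from the dual $\Phi = W^* W$ form to our $\Phi = W W^*$ form): it gives $\tilde W_1 = W_1 U$ for a constant $m \times m$ unitary $U$. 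Then $\tilde W_2 = H \tilde W_1 = H W_1 U = W_2 U$, so $\tilde W = W U$, which is precisely the claimed right multiplication by a constant unitary.

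\textbf{Main obstacle.} The routine parts are the block-matrix verification and the final substitution; the delicate step is transferring the uniqueness statement from the dual full-row-rank setting of the cited references to our $\Phi = W W^*$ full-column-rank setting, and, just as importantly, justifying that $W_1$ inherits the minimum-phase property \emph{from} $W$ (and conversely that minimum-phase $W_1$ yields minimum-phase tall $W$). Concretely, I must argue that the zeros of the tall factor $W$ coincide with the zeros of its square top block $W_1$, so that ``$W$ outer'' is equivalent to ``$W_1$ outer.'' This requires the generalized definition of zeros for tall full-rank matrices developed in \cref{apdxA}: one shows that $W = \begin{bmatrix} I \\ H \end{bmatrix} W_1$ with $H$ analytic, so that left-multiplication by the analytic tall factor $\begin{bmatrix} I \\ H \end{bmatrix}$ (which has no finite zeros in the relevant sense) does not create or cancel zeros, reducing the minimum-phase condition for $W$ to that for $W_1$. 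I expect this zero-structure argument, rather than the algebra, to be the crux, and it is why the paper defers the full proof to \cref{apdxA.1} where the tall-zero machinery is available.
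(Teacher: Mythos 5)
Your proposal rests on two premises that are false in general, and both failures are in fact central themes of this paper. In the existence step you set $W_2 = HW_1$ with $W_1$ an outer factor of $\Phi_{11}$ and justify the minimum-phase property of the resulting tall $W$ by asserting that $H$ is ``analytic in the appropriate region.'' But $H=\Phi_{21}\Phi_{11}^{-1}$ is in general \emph{not} stable: Theorem~\ref{thmHW1} says $H$ is stable if and only if the top block of a minimum-phase $W$ is itself minimum-phase, and the paper recalls (via the counterexample of \cite{CLPtac21} answering Deistler's question) that this can fail no matter how $\Phi$ is rearranged or partitioned. When $H$ has poles outside the closed unit disc, these cannot cancel against zeros of the outer $W_1$ (which lie in the closed disc), so your $W_2 = HW_1 = \Phi_{21}(W_1^*)^{-1}$ is unstable and your $W$ is not a valid spectral factor at all. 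Your construction is exactly the special case \eqref{WwhenW1min}, valid only for stable $H$; in general the top block of a minimum-phase $W$ must be $G_1Q_1$ with a nontrivial inner factor $Q_1$, extracted by the coprime factorization machinery (Lemma~\ref{lemFeasible}, Theorem~\ref{thmSPdt}).

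The uniqueness step inherits the same error: you take for granted that the top blocks $W_1$, $\tilde W_1$ of two minimum-phase tall factors are themselves minimum-phase factors of $\Phi_{11}$, and your declared crux --- that the zeros of $W$ coincide with those of $W_1$, so that ``$W$ outer $\Leftrightarrow$ $W_1$ outer'' --- is precisely the statement that \cref{apdxA} disproves: $W$ is outer if and only if the inner parts of $W_1$ and $W_2$ are right-coprime ($Q_1 \wedge_R \hat Q_2=I_m$), and $W$ can be outer while \emph{neither} block is. Hence the square full-rank uniqueness theorem cannot be invoked on the top blocks; from $W_1W_1^* = \tilde W_1\tilde W_1^*$ alone you only get $\tilde W_1 = W_1 Q$ with $Q$ all-pass, not constant. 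The paper's own proof avoids both traps and is structured differently: existence is obtained by taking \emph{any} stable full-column-rank factor $W_o$ (which exists by standard theory) and extracting its outer part through an outer-inner factorization $W_o = WQ$; uniqueness is obtained by showing that if $W$ and $W\hat Q$ are both minimum-phase with $\hat Q$ inner, then $\hat Q$ must also be outer, and since $\hat Q(z)^{-1}=\hat Q(z^{-1})^\top$ inverts poles and zeros, $\hat Q$ is forced to be a constant unitary matrix.
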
 

\section{Spectral factorization}\label{secSP}
In this section, we shall first explain the key observation behind our novel approach, and then provide a procedure for spectral factorization of $\Phi$ to obtain a full-rank minimum-phase factor.
Before starting, it is worth mentioning that neither $W_1$ nor $W_2$ in a minimum-phase full-rank factor $W$ need to be minimum-phase. This fact prevents us from obtaining $W$ by calculating some (maybe square or scalar) submatrices one by one; for more details see \cref{apdxA}.

\subsection{Key observation} 
Suppose we have already obtained a square minimum-phase factor $G_1$ for $\Phi_{11}$, which is easily obtained from any stable factor of $\Phi_{11}$ by square outer-inner factorization and existing computational methods. Hence for $W_1$ in a minimum-phase $W$, there exist a nontrivial $m \times m$ inner function $Q_1(z)$, satisfying
\begin{equation}\label{W1G1Q1}
    W_1(z)=G_1(z)Q_1(z).
\end{equation}
Then we have the following result on the existence of a rank-deficient spectral factorization solution.

\begin{lemma}\label{lemFeasible}
Given $H$, $G_1$, a full-column-rank minimum-phase spectral factor \eqref{W1W2} can be constructed.
\end{lemma}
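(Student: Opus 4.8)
The plan is to collapse the whole construction onto a single coprime factorization. By \cref{lem1} every full-column-rank spectral factor has the form $W=\begin{bmatrix} I \\ H \end{bmatrix}W_1$, and by the key observation \eqref{W1G1Q1} every square spectral factor of $\Phi_{11}$ is $W_1=G_1Q_1$ for some $m\times m$ inner $Q_1$. I would therefore start from the ansatz
$$W(z)=\begin{bmatrix} I \\ H(z) \end{bmatrix}G_1(z)Q_1(z),$$
so that the only remaining task is to exhibit one inner $Q_1$ for which $W$ is minimum-phase.

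The first step is to check that this ansatz is already a full-column-rank spectral factor for \emph{any} inner $Q_1$. Since $Q_1Q_1^*=I$ and $G_1G_1^*=\Phi_{11}$, a block computation gives $WW^*=\begin{bmatrix} I \\ H \end{bmatrix}\Phi_{11}\begin{bmatrix} I & H^* \end{bmatrix}$, whose blocks are $\Phi_{11},\ \Phi_{11}H^*,\ H\Phi_{11},\ H\Phi_{11}H^*$. Using $H=\Phi_{21}\Phi_{11}^{-1}$ together with the fact that $\rank\Phi=m$ forces the Schur complement $\Phi_{22}-\Phi_{21}\Phi_{11}^{-1}\Phi_{12}$ to vanish, these equal $\Phi_{11},\Phi_{12},\Phi_{21},\Phi_{22}$, so $WW^*=\Phi$; full column rank is clear because $G_1Q_1$ is square and nonsingular. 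Thus the spectral-factor and rank requirements hold irrespective of $Q_1$, and the entire content of the lemma is the minimum-phase condition.

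To secure that condition, I would observe that the only poles of $W$ outside the closed unit disc are those of $H$, carried into $L:=\begin{bmatrix} I \\ H \end{bmatrix}G_1$ because the outer factor $G_1$ cannot cancel them; the role of $Q_1$ is precisely to cancel these unstable poles. It is important to keep $G_1$ inside $L$, since a $Q_1$ tuned to $\begin{bmatrix} I \\ H \end{bmatrix}$ alone would fail once $G_1$ is inserted. I would then obtain $Q_1$ from a right-coprime factorization with inner denominator, applying \cref{thmCpFactdt} to $L^\top$ (transposing afterwards, as announced in \cref{subsec_copfact}) to get $L=N D^{-1}$ with $D$ inner of minimal degree $n_u$ equal to the number of unstable poles of $H$. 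Setting $Q_1:=D$ yields $W=N=LD$, which is stable and has the ansatz form with $W_1=G_1D$. Feasibility is automatic: taking a minimal realization of $L^\top$ makes the unstable pair $(C_u,A_u)$ observable, so the Stein equation \eqref{Steindt} has the required invertible Hermitian solution and the hypothesis of \cref{thmCpFactdt} is met; hence the factor "can be constructed," as claimed.

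The hard part will be verifying that $W=N$ is genuinely minimum-phase, i.e.\ that the tall matrix $N$ keeps full column rank throughout the exterior of the open unit disc. This is exactly the subtlety flagged before \eqref{W1G1Q1}: at a zero $z_0$ of $D$ (an unstable pole of $H$) the top block $W_1=G_1D$ drops rank, yet $W$ need not, because the bottom block $HG_1D$ stays finite there thanks to the pole-zero cancellation and supplies the missing rank. To make this rigorous I would exploit the minimality of the factorization: since $\deg D=n_u$, the inner factor $D$ introduces nothing beyond what cancels the unstable poles, so $W=LD$ is a stable spectral factor of least McMillan degree. Invoking \cref{thmUnique}, a minimum-phase factor exists and every stable factor equals it times an inner matrix; as the minimum-phase factor is the unique one of least degree, the minimal-degree $W$ produced here must coincide with it up to a constant unitary, hence is minimum-phase. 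Turning the fill-in picture into a clean zero count for the tall factor — confirming that right-coprimeness of $N$ and $D$ leaves $N$ free of zeros outside the disc — is the main obstacle.
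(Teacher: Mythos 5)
Your setup is sound and matches the paper's ingredients: the ansatz built by stacking $G_1Q_1$ on top of $HG_1Q_1$, the Schur-complement check that it is a full-column-rank spectral factor for \emph{every} inner $Q_1$ (correct, since $\rank\Phi=m$ forces $\Phi_{22}=\Phi_{21}\Phi_{11}^{-1}\Phi_{12}$), and the feasibility of \cref{thmCpFactdt} (minimality of the realization gives observability of $(C_u,A_u)$, hence an invertible Hermitian solution of \eqref{Steindt}). The genuine gap is exactly where you place the entire burden of the lemma: showing that the numerator $N=LD$ returned by the coprime factorization is minimum-phase. You argue this from two claims, (i) $N$ has least McMillan degree among stable spectral factors, and (ii) the minimum-phase factor is \emph{the unique} stable spectral factor of least degree. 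Claim (ii) is false. Already for the scalar full-rank density $\Phi(z)=(2+z)(2+z^{-1})$, both $2+z^{-1}$ (zero at $-1/2$, minimum-phase) and $1+2z^{-1}$ (zero at $-2$, not minimum-phase) are stable spectral factors of McMillan degree one, and they are not related by a unimodular constant; minimality of degree never singles out the outer factor, since the minimal stable spectral factors form a whole family of which the outer one is a single member. Claim (i) is also asserted rather than proved. So the proof is incomplete at its decisive step, as you concede in your last sentence.

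The step can be closed, and the repair is essentially the paper's own proof read in the construction direction. By \cref{thmUnique} a minimum-phase factor $W^o$ with blocks $W_1^o, W_2^o$ exists; by \cref{lem1}, $W_2^o=HW_1^o$, and $W_1^o=G_1Q_1^o$ for some inner $Q_1^o$ because $W_1^oW_1^{o*}=\Phi_{11}=G_1G_1^*$ with $G_1$ outer. Hence $L=W^o(Q_1^o)^{-1}$ (equivalently $HG_1=W_2^o(Q_1^o)^{-1}$, which is \eqref{CopInn}) is itself a right factorization with stable numerator and inner denominator, and it is \emph{coprime}: a nontrivial common right inner divisor $Q$ would make $W^oQ^*$ a stable spectral factor and exhibit the outer $W^o$ as a stable factor times $Q$, contradicting the uniqueness part of \cref{thmUnique}. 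Since right-coprime factorizations with inner denominator are unique up to a constant unitary right factor (this is precisely the parametrization by the unitary $P$ in \cref{thmCpFactdt}), your computed pair satisfies $N=W^oU$, $D=Q_1^oU$ with $U$ constant unitary, so $N$ is minimum-phase — your "fill-in" picture made rigorous. One further difference worth noting: the paper applies the factorization to the $p\times m$ block $HG_1$, not to your $(m+p)\times m$ matrix $L$. Because $G_1$ is outer it contributes no unstable poles, so the pair $(A_u,C_u)$ and the Stein equation are identical in both variants, but your realization drags along all the stable dynamics of $G_1$; this is exactly the dimensional economy the paper claims over the earlier coprime-factorization approaches, so your variant is correct in spirit but surrenders part of that advantage.
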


\begin{proof}
From \eqref{W1G1Q1} and \eqref{W1HW2}, we have
\begin{equation}\label{CopInn}
    H(z)G_1(z)=W_2(z)Q_1^*(z)=W_2(z)Q_1(z)^{-1}.
\end{equation}
Since $W$ is minimum-phase, $W_2$ must be stable. Hence $Q_1^{-1}$ contains  the complete unstable part of the factor $HG_1$.
Hence one can get $W_2$ and $Q_1$  by performing a right-coprime factorization in the rational $\Hb^{\infty}_{p\times m}$ space, with a restriction that $Q_1$ should be inner. Hence, $W_1$ and $W$ can be calculated from \eqref{W1G1Q1}, \eqref{CopInn} and \eqref{W1W2} in both the continuous-time and discrete-time cases.
\end{proof}

\cref{lemFeasible} and \cref{thmUnique} respectively show the feasibility and uniqueness of such tall minimum-phase spectral factorizations through a right-coprime factorization.
That is, given a spectral density $\Phi$ with partition \eqref{Phi}, a minimum-phase full-rank factor can be determined uniquely up to right multiplication by a unitary matrix, from a minimum-phase factor $G_1$ of $\Phi_{11}$ and the deterministic relation $H$ inside the low rank process.

In the following, we shall give the set of solutions to tall full-rank minimum-phase factorization. In most cases, the solutions depend on solving a Sylvester matrix equation (specifically, a Stein equation for the discrete-time and a Lyapunov equation for the continuous-time case).
However, there is still a special case, when $W_1$ itself in a minimum-phase $W$ is minimum-phase. In that case the complicated right-coprime factorization can be avoided.
We shall first discuss the special case and then introduce the more general solution in discrete time.

\subsection{A special case}
When $W_1$ in a minimum-phase $W$ is itself minimum-phase, the matrix $H(z)G_1(z)=W_2(z)Q_1^*(z)$ will be a stable matrix, as shown in the proof of Theorem~\ref{thmHW1} below.
Hence we may obtain the set of minimum-phase $W(z)$, avoiding doing any right-coprime factorization. To explain this, we give the following theorem on the sufficient and necessary condition of a minimum-phase $W_1$.

\begin{theorem}\label{thmHW1}
Let $W(z)$ be a minimum-phase spectral factor with decomposition \eqref{W1W2} and let $H(z)$ be defined as in \eqref{W1HW2}. Then
$H(z)$ is stable if and only if $W_1(z)$ is minimum-phase.
\end{theorem}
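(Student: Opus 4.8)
The plan is to prove the two implications separately, using the deterministic relation $W_2 = H W_1$ from \cref{lem1} together with the observation that a minimum-phase $W$ forces both $W_1$ and $W_2$ to be stable: their poles form a subset of the poles of $W$, which all lie in the open unit disc. Thus throughout I may treat $W_1$ and $W_2$ as stable, and the only issue for $W_1$ being minimum-phase is the location of its zeros.

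For the ``if'' direction I would argue directly. If $W_1$ is minimum-phase, then its square inverse $W_1^{-1}$ is stable, since the poles of $W_1^{-1}$ are the zeros of $W_1$, which lie in the closed unit disc. As $W_2$ is stable as well, the product $H = W_2 W_1^{-1}$ has all its poles in the closed unit disc, so $H$ is stable. This direction is essentially immediate once stability of $W_2$ and of $W_1^{-1}$ is recorded.

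For the ``only if'' direction I would proceed by contraposition, and this is where the real content lies. Suppose $W_1$ is not minimum-phase. Since $W_1$ is already stable, the only obstruction is an unstable zero, i.e. a finite point $z_0$ with $|z_0|>1$ and $\det W_1(z_0)=0$; I pick a nonzero $v$ in the kernel of $W_1(z_0)$. The idea is to push $v$ through the relation $W_2 = H W_1$: assuming $H$ is stable, its poles lie inside the disc, so $H$ is analytic at $z_0$ and the rational identity $W_2 = H W_1$ holds pointwise there, giving $W_2(z_0) v = H(z_0)\,W_1(z_0) v = 0$. Combined with $W_1(z_0) v = 0$ this yields $W(z_0) v = 0$, so $z_0$ is a zero of the tall full-column-rank factor $W$ lying outside the closed unit disc, contradicting the hypothesis that $W$ is minimum-phase. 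Hence $H$ stable forces $W_1$ to be minimum-phase.

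The step I expect to be the main obstacle is the clean handling of the zero $z_0$. One must verify that ``$W_1$ not minimum-phase'' genuinely produces a finite point outside the closed disc at which $W_1$ drops rank — here stability of $W_1$ is used to rule out a pole at $z_0$ — and that a zero of $W$ outside the disc really contradicts the minimum-phase property in the tall full-column-rank sense discussed in \cref{apdxA}. I would also connect the argument with the factorization $W_1 = G_1 Q_1$ of the key observation: because $G_1$ is outer, an unstable zero of $W_1$ is exactly an unstable zero of the inner factor $Q_1$, and $H$ stable is equivalent to $H G_1 = W_2 Q_1^{-1}$ being stable; so the same reasoning shows that $H G_1$ stable forces $Q_1$ to be a constant unitary matrix, which is precisely $W_1$ minimum-phase.
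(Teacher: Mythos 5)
Your two directions are essentially sound and largely track the paper's proof, with one notable difference in route. For the ``if'' direction ($W_1$ minimum-phase $\Rightarrow$ $H$ stable) you argue directly from $H = W_2 W_1^{-1}$, locating the poles of $W_1^{-1}$ at the zeros of $W_1$; this is more elementary than the paper's argument, which invokes the outer--inner factorization \eqref{W1G1Q1} and the uniqueness reasoning behind \cref{thmUnique} to force $Q_1$ to be a constant unitary matrix. For the ``only if'' direction, your kernel-vector formulation $W_2(z_0)v = H(z_0)W_1(z_0)v = 0$ is a sharper version of the paper's cancellation argument: for matrix functions it is the common null \emph{direction}, not merely a common zero location, that makes $z_0$ a zero of $W$, so on this point your write-up is actually more rigorous than the paper's.

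There is, however, one genuine gap, in the contrapositive step. You assert that a stable, non-minimum-phase $W_1$ must have a \emph{finite} zero $z_0$ with $|z_0|>1$. In the discrete-time conventions of this paper that is false: the unstable region is the exterior of the closed unit disc \emph{including the point at infinity} (see the wording of \cref{thmCpFactdt}), and an outer function cannot vanish at $z=\infty$, since $z^{-1}$ is inner. For example, $W_1(z) = 1/(z-0.5)$ is stable and has no finite zero at all, yet it is not minimum-phase, because $W_1(z) = z^{-1}\cdot\frac{z}{z-0.5}$ carries a nontrivial inner (delay) factor. If $W_1$ fails to be minimum-phase only through such a zero at infinity, your argument produces no $z_0$ and the contradiction never gets started. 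The repair uses exactly your own technique: $H$ stable means in particular $H$ is proper (a pole at infinity counts as unstable), hence $H$ is analytic at $z=\infty$; taking $v\neq 0$ with $W_1(\infty)v = 0$ gives $W_2(\infty)v = H(\infty)W_1(\infty)v = 0$, hence $W(\infty)v = 0$, so $W$ has a delay-type inner divisor, contradicting the assumption that $W$ is outer. With that case added --- and with the understanding, shared with the paper's own proof, that ``stable'' must be read as tolerating poles on the unit circle when $W_1$ has zeros on the boundary --- your proof is complete.
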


\begin{proof}
Sufficiency: Recall that $H=W_2W_1^{-1}$. If $H$ is stable, any non-minimum-phase zeros of $W_1$ (i.e., the poles of $W_1^{-1}$), should be cancelled by the non-minimum-phase zeros of $W_2$.
However, if this holds, $W_1$ and $W_2$ will have the same non-minimum-phase zeros, implying that these zeros are non-minimum-phase zeros of $W$, which is conflict to the fact that $W$ is minimum-phase (see \cref{apdxA}). Hence $W_1$ has no non-minimum-phase zeros, i.e., $W_1$ is minimum-phase.\\
Necessity: When $W_1(z)$ is minimum-phase, by $W_1(z)=G_1(z)Q_1(z)$, similar to the reasons in the proof of \cref{thmUnique} in \cref{apdxA.1},
$Q_1$ should be a constant unitary matrix. Hence $W_2(z)Q_1^{-1}=H(z)G_1(z)$ is stable. Recalling that $G_1(z)$ has no non-minimum-phase zero, $H(z)$ is stable.
\end{proof}

From \cref{thmHW1}, given a stable $H(z)$, $W_1(z)$ will be minimum-phase when recovering a minimum-phase $W(z)$, i.e., $W_1(z)=G_1(z)Q_1$ with $Q_1$ a unitary matrix.
Hence the set of minimum-phase factors is
\begin{equation}\label{WwhenW1min}
    W(z)=\begin{bmatrix}
        G_1(z) \\
        H(z)G_1(z)
      \end{bmatrix}Q_1,
\end{equation}
where $Q_1$ is any unitary matrix.

\cref{thmHW1} applies directly to the continuous-time case, by substituting continuous-time functions.
It also gives a further answer to Manfred Deistler's question \cite{emailDeistler}: is there always a stable causal deterministic relation between the subvectors of a singular process (i.e., in the special feedback structure \eqref{fbspecial} in Section~\ref{secIden})? 
In our previous work \cite{CLPtac21}, this question was discussed and a negative answer was given through a counterexample.
In this paper a necessary and sufficient condition for $H(z)$ to be stable is given in Theorem~\ref{thmHW1}, which can be used as a judging criterion. Moreover, since $W_1(z)$ in a minimum-phase $W(z)$ can be non-minimum-phase no matter how $\Phi(z)$ is rearranged or partitioned, 
the stability cannot always hold.

In the following we shall give the procedure of obtaining a minimum-phase spectral factor $W(z)$ in general in discrete time, through the general left-coprime factorization methods in Corollary~\ref{thmCpFactdt}.
Note that by using the left-coprime results for our right-coprime factorizations, matrix transpositions are used in some final steps.

\subsection{Rank-deficient spectral factorization}
Suppose the proper function $G_1^\top H^\top$ has a minimal realization \eqref{symb},
where $A$ is square and invertible. From the fact that
$$
    \left[\begin{array}{c|c}A&B\\\hline C&D\end{array}\right] = \left[\begin{array}{c|c}PAP^{-1}& PB\\\hline CP^{-1}&D\end{array}\right],
$$
when $P$ is an invertible matrix,
the realization 
is equivalent to 
\begin{equation}\label{dtG1H}
    G_1(z)^\top H(z)^\top=\left[\begin{array}{cc|c}A_u & A_{us} & B_u \\ 0 & A_s & B_s\\
    \hline C_u & C_s&D\end{array}\right],
\end{equation}
where the eigenvalues of $A_u$, $A_s$ respectively correspond to the unstable 
and stable 
poles of $G_1(z)^\top H(z)^\top$. When \eqref{AuAs}
is a Jordan matrix, $A_{us}=0$. 

From Corollary~\ref{thmCpFactdt}, 
 we know that the solution to left-coprime (also our right-coprime) factorizations in discrete time 
 is based on solving the Stein equation \eqref{Steindt}.
Now we shall present our results on full-column-rank minimum-phase spectral factorization of rational rank-deficient densities.

\begin{theorem}\label{thmSPdt}
 Given a low-rank rational spectral density $\Phi(z)$ partitioned as in \eqref{Phi}, a minimum-phase square spectral factor $G_1(z)$ of $\Phi_{11}(z)$, and a minimal realization \eqref{dtG1H}, then the Stein equation \eqref{Steindt} has a unique invertible Hermitian solution $X$,
 and all the minimum-phase full column rank spectral factors $W(z)$ can be given from
 \begin{equation}
    W(z)=\begin{bmatrix}
           G_1(z)Q_1(z) \\
           W_2(z)
         \end{bmatrix},
 \end{equation}
with 
\begin{subequations}\label{W2Q1dt}
\begin{equation}\label{W2dt}
    W_2(z)=\left[\begin{array}{cc|c}
     A_u+R_u & A_{us}+R_s & B_u+R_D \\
     0 & A_s & B_s \\ \hline
     PC_u & PC_s & PD \end{array}\right]^\top,
\end{equation}
\begin{equation}\label{Q1dt}
    Q_1(z)=\left[\begin{array}{c|c}
        A_u+R_u & (1-z)M \\ \hline
        PC_u & P
    \end{array}\right]^\top,
\end{equation}
\end{subequations}
where $R_u$, $R_s$, $R_D$, $M$ are the same as \eqref{RM}, and $P$ is any unitary matrix; the other matrices are from the minimal realization in \eqref{dtG1H}. The pair $(Q_1,W_2)$ is unique when the unitary matrix $P$ has been chosen.
\end{theorem}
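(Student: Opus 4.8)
The plan is to assemble the statement from the three ingredients already in place: the deterministic relation of \cref{lem1}, the coprime-factorization machinery of \cref{thmCpFactdt}, and the existence/uniqueness of \cref{lemFeasible} and \cref{thmUnique}. The central object is the proper matrix $G_1(z)^\top H(z)^\top$ with its minimal realization \eqref{dtG1H}; everything reduces to running \cref{thmCpFactdt} on this realization and then transposing back, so the bulk of the work is verifying that the hypotheses of that corollary are met and that the resulting factorization is exactly the spectral factorization we want.

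First I would record that, by \eqref{CopInn}, we seek $W_2$ stable and $Q_1$ inner with $H(z)G_1(z)=W_2(z)Q_1(z)^{-1}$; transposing gives $G_1^\top H^\top = (Q_1^{-1})^\top W_2^\top = (Q_1^\top)^{-1} W_2^\top$, which is precisely a \emph{left}-coprime factorization $T = T_D^{-1}T_N$ of $T:=G_1^\top H^\top$ with inner denominator $T_D = Q_1^\top$ and numerator $T_N = W_2^\top$. So the problem is set up exactly for \cref{thmCpFactdt}. The key hypothesis of that corollary is the existence of an invertible Hermitian solution $X$ to the Stein equation \eqref{Steindt}, $X - A_u^* X A_u - C_u^* C_u = 0$. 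I would establish existence and invertibility by noting that $(A_u,C_u)$ is the unstable part of a \emph{minimal} realization: minimality forces observability of $(C_u,A_u)$, and since all eigenvalues of $A_u$ lie strictly outside the closed unit disk, the Stein recursion converges to a unique Hermitian $X$, which observability makes nonsingular (indeed sign-definite). This is the step where the \emph{minimal}-realization assumption in \eqref{dtG1H} does real work.

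Next I would invoke \cref{thmCpFactdt} verbatim to produce the class of all minimal-degree solutions $(T_N,T_D)$ through the formulas for $R_u,R_s,R_D,M$ in \eqref{RM}, parametrized by the free unitary $P$. Transposing back yields the displayed $W_2(z)$ in \eqref{W2dt} and $Q_1(z)$ in \eqref{Q1dt}, and then $W_1 = G_1 Q_1$ with $W$ stacked as in \eqref{W1W2}. I would then check the two properties that make this a genuine spectral factor: stability of $W$ (equivalently of $W_2$ and $W_1$), which follows because the denominator $T_D$ is inner so $T_N$ carries the stable poles, and $Q_1$ inner makes $W_1 = G_1 Q_1$ outer-times-inner hence stable; and the minimum-phase property, which follows from \cref{lemFeasible} together with \cref{thmUnique}. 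The uniqueness of the pair $(Q_1,W_2)$ once $P$ is fixed comes from the minimal-degree ($n_u$) claim in \cref{thmCpFactdt}: two minimal-degree inner-denominator factorizations of the same $T$ with the same $P$ coincide.

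The main obstacle I anticipate is the reconciliation of \emph{two} distinct uniqueness statements that are easy to conflate. \cref{thmUnique} says the full factor $W$ is unique only up to a \emph{right} unitary factor $Q_1$ (an $m\times m$ constant), whereas \cref{thmCpFactdt} produces a family indexed by the unitary $P$ \emph{inside} the realization; I must argue that the freedom in $P$ does not overcount and is consistent with the single allowed overall right-unitary degree of freedom, and in particular that fixing $P$ pins down $(Q_1,W_2)$ exactly. Concretely the delicate point is showing that the coprime factorization here is \emph{forced} to be of minimal degree $n_u$ — i.e., that no unstable-pole cancellation can reduce the degree further — which is what legitimizes applying the ``if and only if'' of \cref{thmCpFactdt} and guarantees the Stein solution is the right one; this is exactly the claim flagged earlier in the paper that ``the coprime factorization in our problem will only lead to a minimal degree solution,'' and I would prove it by arguing that the unstable poles of $G_1^\top H^\top$ are precisely the non-minimum-phase zeros of $W_1$ (hence of $Q_1^\top$), none of which can be cancelled against the stable $W_2^\top$.
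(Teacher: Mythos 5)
Your proposal is correct and follows essentially the same route as the paper's proof: establish that the Stein equation \eqref{Steindt} has a unique invertible Hermitian solution (uniqueness from the anti-stability of $A_u$, invertibility from observability of $(C_u,A_u)$ forced by minimality), apply \cref{thmCpFactdt} and transpose, and then reconcile the $P$-parametrization with \cref{thmUnique} by noting that $P^\top$ factors out as a constant right-unitary multiple of one particular minimum-phase solution. The only repair needed is your justification for $X$: since $A_u$ is anti-stable, the iteration $X_{k+1}=A_u^*X_kA_u+C_u^*C_u$ diverges, so uniqueness should instead be argued from the spectral non-resonance condition $\bar{\lambda}_i\lambda_j\neq 1$ (the paper invokes \cref{thmSylvUnique}), equivalently from the convergent series $X=-\sum_{k\geq 1}(A_u^{-*})^{k}C_u^*C_u(A_u^{-1})^{k}$.
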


\begin{proof}
First we shall show that the solution $X$ to \eqref{Steindt} is unique and invertible, so that \cref{thmCpFactdt} with a minimal degree denominator is suitable for our problem to calculate the coprime factorization.
It is obvious that the pencils $A_u^*-zI$ and $A_u-zI$ are regular (i.e., the determinate of a pencil is not equivalent to $0$).
Then from \cref{thmSylvUnique} in \cref{apdxB}, since $A_u$ and $A_u^*$ are conjugate with $A_u$ containing only the poles outside the closed unit circle, $X$ is the unique solution to \eqref{Steindt}.
The invertibility of $X$ is established since \eqref{dtG1H} is a minimal realization, implying that $(C_u, A_u)$ is an observable pair.


Then from \cref{thmCpFactdt}, \eqref{W2Q1dt} is obtained.
Note that to obtain $W_2(z)$ and the inner denominator $Q_1(z)$, in the end a matrix transposition is required.

Next we shall show that \eqref{W2Q1dt} can represent all the solutions to minimum-phase full-rank spectral factorization, even though the minimal realization \eqref{dtG1H} is not unique. From \cref{thmUnique}, if we can give one spectral factorization solution, then all the solutions can be obtained by right multiplication by a unitary matrix.
The matrices in \eqref{W2Q1dt} can be written as
\begin{subequations}
 \begin{equation}\nonumber
 \begin{split}
    W_2(z)= &\left[\begin{array}{cc|c}
     A_u+R_u & A_{us}+R_s &  B_u+R_D\\
     0 & A_s & B_s\\ \hline
     C_u & C_s & D
     \end{array}\right]^\top \times P^\top \\
      = & \hat{W}_2(z) P^T,
\end{split}
 \end{equation}
 \begin{equation}\nonumber
 \begin{split}
     Q_1(z)&=\left[\begin{array}{c|c}
        A_u+MC_u(1-z) & M(1-z) \\ \hline
        C_u & I
    \end{array}\right]^\top\times  P^\top \\
    &=: \hat{Q}_1(z) P^\top,
 \end{split}
 \end{equation}
\end{subequations}
where $P$ is an arbitrary unitary matrix. Hence we have
$$
    W(z)=\begin{bmatrix}
           G_1(z)\hat{Q}_1(z) \\
           \hat{W}_2(z)
         \end{bmatrix} P^\top,
$$
which can represent all the minimum-phase full-rank factors.
\end{proof}

Note that the set of solutions will change when the partition of $\Phi$ as \eqref{Phi} changes.
Another proof, avoiding referring to \cref{thmUnique}, that \eqref{W2Q1dt} can represent all the solutions under some fixed partition, 
is given in \cref{apdxC}. 

When \eqref{AuAs} is in Jordan form, i.e., $A_{us}=0$, the result \eqref{W2dt} becomes
\begin{equation}\label{W2dtJordan}
\begin{split}
    W_2(z)=\left[\begin{array}{cc|c}
     A_u+R_u & R_s &B_u+R_D\\
     0 & A_s &B_s \\ \hline
     PC_u & PC_s &PD\end{array}\right]^\top,
\end{split}
\end{equation}
and other matrices remain the same as in Theorem~\ref{thmSPdt}.
When $H(z)$ is stable, there is no unstable part of the realization $G_1^\top H^\top$, hence \eqref{W2Q1dt} reduces to
$$
    W_2(z)=\left[\begin{array}{c|c}A_s&B_s\\\hline PC_s &PD\end{array}\right]^\top,\quad Q(z)=P^\top,
$$
with $P$ a unitary matrix, which leads to the same result \eqref{WwhenW1min} in the special case. Hence Theorem~\ref{thmSPdt} also works for the special case.
For the approaches to solve a Stein equation see, e.g., \cite{Chu87}.

A trick to simplify the calculations in practice is to choose a simple format or expression for the minimal realization of $G_1(z)^\top H(z)^\top$. This can be realized by restricting $A_u$ to be a Jordan block (even a diagonal matrix sometimes), or choosing $C$ with values suiting computation  so that \eqref{Steindt} and \eqref{W2Q1dt} are easy to solve and calculate. The trick works also for the continuous-time case in the following, and will be used in the examples of this paper in \cref{secExamples}.

Compared to the general spectral factorization methods in \cite{Oara00}\cite{Oara05}, we apply the coprime factorization to an $m\times p$ matrix, instead of a longer $m \times (m+p)$ matrix, by using the deterministic relation. Hence our approach is more efficient.

\section{Identification of low-rank vector processes}\label{secIden}
The above results in \cref{secSP} on solving the spectral factorization problem  also help us understand better the identification of low-rank vector processes.

Such low-rank processes may arise in diverse areas besides the research on control systems  \cite{FP19,Ferrante-20cdc,GCB06, ADCF12,LMP95}, namely macroeconomics \cite{LDAsurvey22}, networked systems \cite{BGHP-17, CPcdc22}, biology \cite{nwbio09,Yuan-11}, aviation \cite{TR06}, chemical industry \cite{milk22} and other fields. The low-rank vector processes are widely used, since they are common in practice when a system has interconnections, or when a large dimensional vector variable only depends on several scalar key elements. It is meaningful to study  estimation and identification specialized to such systems and thus break away from the traditional methods of full-rank systems which consume considerable computational resources and are not accurate enough in the low-rank cases.

The identification problems are discussed in several recent papers recently \cite{WVD18-2,BLM19,BGHP-17} including our previous papers \cite{PCL-21}\cite{CPLauto21}, where a preliminary but key problem can not be neglected: identifying an innovation model (for example, in discrete time)
\begin{equation}\label{yW1W2e}
  y(t)= W(z)e(t)=\begin{bmatrix}
          W_1(z) \\
          W_2(z)
        \end{bmatrix}e(t),
\end{equation}
where $y(t)$ of dimension $m+p$ is a low rank vector process (i.e., with a low rank spectral density), $e(t)$ of dimension $m$ is a normalized innovation process. Hence $W:=[W_1^\top,~W_2^\top]^\top$ is an $(m+p)\times m$ minimum-phase transfer matrix, also a full-rank minimum-phase spectral factor.

\subsection{An innovation model}
Since $y(t)$ has a greater dimension than $e(t)$, a classical identification approach such as prediction error methods (PEM) \cite{Ljung} cannot be applied directly. Then a problem comes naturally: can we identify the entries of $W$ one by one when it is $2\times 1$, or more generally, can we partition $W$ as in \eqref{yW1W2e}, with $W_1$ full-rank, and directly identify the two parts separately?

Recall that when identifying $y_1(t):=W_1(z)e(t)$, the model we estimate actually is $y_1(t)=G_1(z)e_1(t)$, where $G_1$ is minimum-phase and $e_1(t)$ is an innovation process of $y_1(t)$.
Hence our work \cite{CPLauto21} gave a negative answer to the above question from the aspect of a minimum-phase factor.
The reason in brief is that, neither $W_1$ nor $W_2$ must be minimum-phase, in a minimum-phase full-rank factor $W$ (for more details see \cref{apdxA}), implying that the estimates by PEM directly are not the ones of $W_1$ and $W_2$.
And how to recover the minimum-phase $W(z)$ from some accessible estimates becomes essential.

\begin{figure}[thpb]
      \centering
      \includegraphics[scale=0.45]{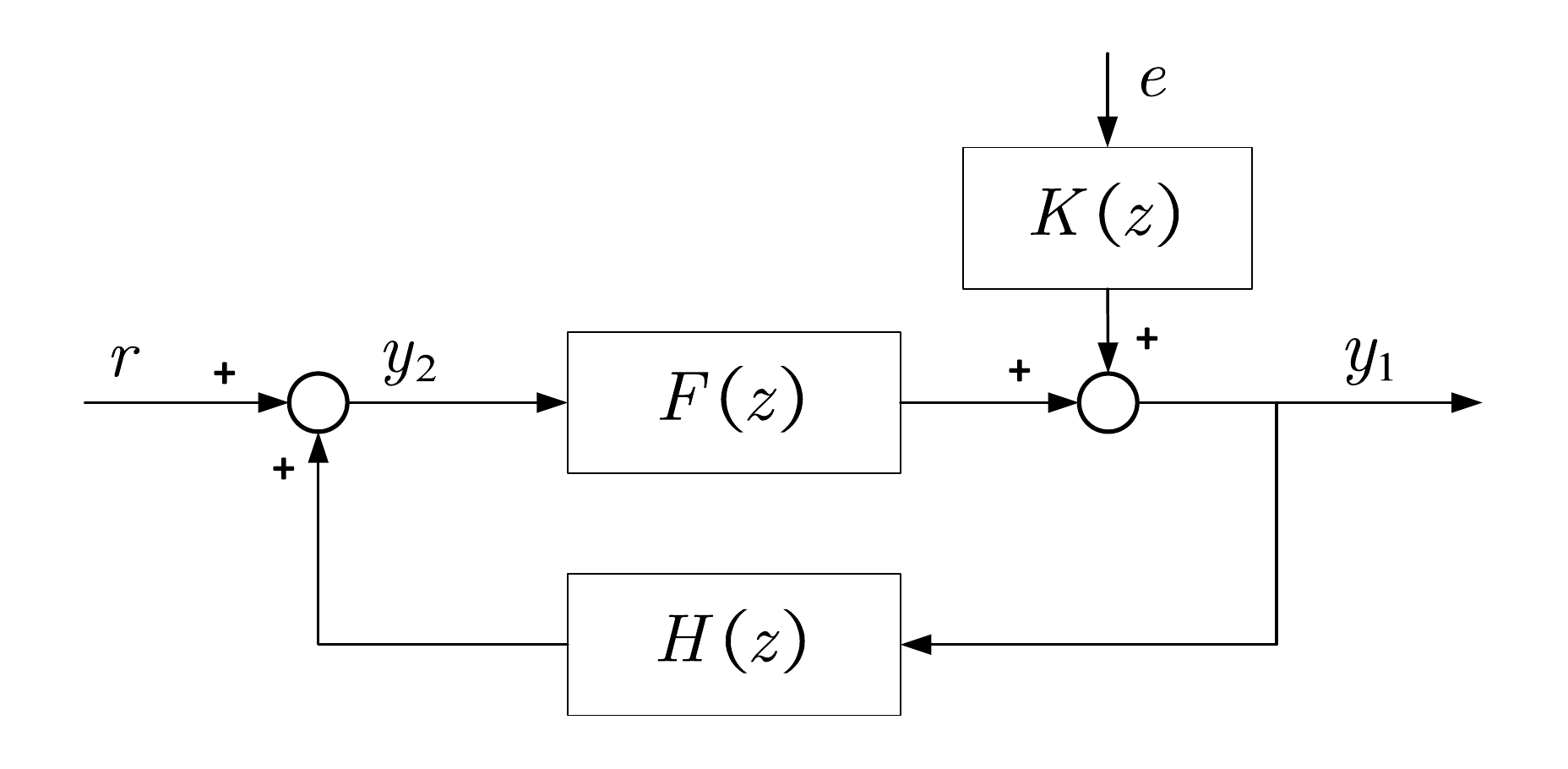}
      \caption {The structure diagram of feedback models.}
      \label{Figfb}
\end{figure}

In \cite{CPLauto21}, a special feedback structure as in \cref{Figfb} with $r(t)\equiv 0$ is used to simplify the identification of $W(z)$, and to explore the interconnections between the sub-vectors,
\begin{subequations}\label{fbspecial}
 \begin{align}\label{y1Fy2v}
        y_1(t) &= F(z)y_2(t)+ K(z)e(t), \\
    y_2(t) &= H(z)y_1(t),
\end{align}
\end{subequations}
where $K(z)$ is a minimum-phase function, $H(z)$ is the deterministic function in \cref{lem1}, which is
easy to identify by imposing a deterministic relation on $y_2$ and $y_1$. 

Previously, our approach had difficulties in estimating $W$ when facing processes with a larger scale, because there was a lack of sound theories on tall minimum-phase factors, and lack of simple and appropriate computational methods for low-rank spectral factorization.
In this paper, Theorem~\ref{thmHW1} first gives a necessary and sufficient condition to check if $W_1$ is minimum-phase. Then when the estimate of $H$ converges to a stable matrix, we can use the estimate of $G_1$ as the estimate of $W_1$, and an estimated innovation model \eqref{yW1W2e} can be calculated from \eqref{WwhenW1min}.
Conversely, if $H$ does not converge to a stable matrix, the general solution in \cref{secSP} can be used.

\subsection{The interconnections in the process}
As for the identification of the interconnections between $y_1(t)$ and $y_2(t)$, which are represented by the special feedback model \eqref{fbspecial}, we have shown that $H(z)$ is unique and hence identifiable given a fixed partition. However, the forward loop \eqref{y1Fy2v} is shown to be not identifiable in \cite{CPLauto21}.

To reconstruct $F(z)$, one idea is to construct a compensator $F$ for $H$ so that the whole system is internally stable, with the help of robust control (see, e.g., \cite{Zhou-D-G-95}). This was first suggested in our paper on modeling of low-rank time series \cite[Section VI, VIII-D]{CLPtac21}, which deals with an equivalent problem in dual form. There the problem was solved by Nevanlinna-Pick interpolation (see, e. g., \cite{CLtac}\cite{BLN03}). Note that here since $K(z)$ is a minimum-phase function, i.e., the process $K(z)e(t)$ is also an innovation process, the internal stability of the whole system only depends on whether the sensitive function is stable; see \cite{CLPtac21}.

Another view is to give a particular function $F(z)$ which for example, besides being stable with at least one unit delay. This $F(z)$ can act as the transfer function of the
Wiener predictor of $y_1(t)$ based on the (strict) past of $y_2(t)$, making
the two parts in the forward loop orthogonal.
In \cite{CPLauto21}, we found that when $K(z)$ in the forward loop is a constant matrix, $F(z)y_2(t)=zF_+(z)y_2(t-1)$ coincides with the one-step ahead Wiener predictor based on the strict past of $y_2(t)$, see \cref{lemcalF} in \cref{apdxA}. The estimation of a Wiener filter was also discussed, and realized in the scalar case.

Here in this paper, we shall first give a conclusive theorem on calculating the Wiener filter and its corresponding $K(z)$ in one forward loop, which we name a canonical forward loop. And then the calculations when $F(z)$ is a matrix are realized through our results above on the low rank spectral factorization to full column rank outer factors.

\begin{figure}[thpb]
      \centering
      \includegraphics[scale=0.6]{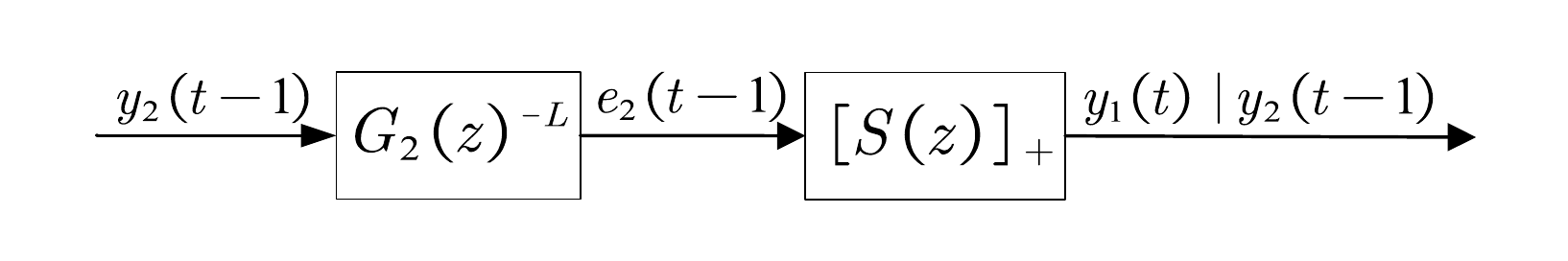}
      \caption {Strictly causal Wiener filter from $y_2$ to $y_1$.}
      \label{FigWiener}
\end{figure}

Suppose $W_2$ has an outer-inner factorization,
\begin{equation}\label{W2G2Q2}
    W_2(z)=G_2(z)Q_2(z),
\end{equation}
where $G_2$ is a $p\times r$ outer matrix, $Q_2$ is $r \times m$ inner satisfying $Q_2Q_2^*=I$, with $r=\rank(W_2)$.
A structure diagram of the Wiener filter is shown in Figure~\ref{FigWiener}, where $G_2(z)^{-L}$ denotes the left inverse of $G_2(z)$ and
$[\cdot]_+$ denotes the orthogonal projection operator onto the vector Hardy space $\Hb^2$ yielding the causal stable part of a function.
Then we have the following result.

\begin{theorem}[Wiener filter]\label{thmWiener}
A forward loop of the feedback model \eqref{fbspecial} can be given as
\begin{equation}\label{Feedplus}
    y_1(t)= F_+ y_2(t-1) +K_+e(t),
\end{equation}
where
\begin{subequations}
\begin{equation}\label{F+}
    F_+=[zW_1Q_2^*]_+G_2^{-L}
\end{equation}
denotes all solutions to the one-step ahead Wiener filter from $y_2$ to $y_1$,
\begin{equation}\label{K+}
    K_+=W_1-z^{-1}[zW_1Q_2^*]_+Q_2,
\end{equation}
\end{subequations}
$W$ is a minimum-phase spectral factor of the process $y$ as in \eqref{W1W2}, and $G_2$ is a minimum-phase full-rank spectral factor of $\Phi_{22}(z)$.
\end{theorem}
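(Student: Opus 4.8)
The plan is to recognize \eqref{Feedplus} as the standard one-step causal Wiener prediction of $y_1$ from the strict past of $y_2$, carried out in the innovation coordinates of $y_2$. First I would use the outer--inner factorization \eqref{W2G2Q2} to form the innovation representation $y_2 = G_2\epsilon$ with $\epsilon := Q_2 e$; since $Q_2Q_2^* = I_r$ and $e$ is normalized, $\epsilon$ is an $r$-dimensional normalized white noise. Because $G_2$ is outer it admits a stable causal left inverse $G_2^{-L}$ with $G_2^{-L}G_2 = I_r$, so the maps $y_2\mapsto \epsilon = G_2^{-L}y_2$ and $\epsilon\mapsto y_2 = G_2\epsilon$ are both causal; consequently the closed span of the strict past $\{y_2(s):s\le t-1\}$ coincides with that of $\{\epsilon(s):s\le t-1\}$, and it suffices to project $y_1(t)$ onto the latter.

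Next I would compute that projection. Since $\epsilon$ is white, it is read off the cross spectrum $\Phi_{y_1\epsilon}(z)=W_1(z)Q_2(z)^*$ by retaining only the strictly positive delays. Writing $z$ for the forward shift (so $z^{-1}$ is the delay and causal symbols are series in $z^{-1}$, matching the realization convention \eqref{symb}), the strictly causal part of a two-sided symbol $A(z)$ is $z^{-1}[zA(z)]_+$, which keeps the delays $k\ge 1$ rather than the causal $[\,\cdot\,]_+$ which would also retain the present $k=0$. Hence the predictor is $\hat y_1(t)=z^{-1}[zW_1Q_2^*]_+\,\epsilon(t)$. Substituting $\epsilon=G_2^{-L}y_2$ and using $z^{-1}y_2(t)=y_2(t-1)$ gives $\hat y_1(t)=[zW_1Q_2^*]_+G_2^{-L}\,y_2(t-1)=F_+y_2(t-1)$, which is exactly \eqref{F+}. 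The residual is $y_1(t)-\hat y_1(t)=\bigl(W_1-z^{-1}[zW_1Q_2^*]_+Q_2\bigr)e(t)=K_+e(t)$ after substituting $y_1=W_1e$ and $\epsilon=Q_2e$, matching \eqref{K+}; together these yield \eqref{Feedplus}.

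Finally I would confirm optimality and address the word ``all.'' Optimality follows by checking orthogonality: the cross spectrum of $y_1-\hat y_1$ with $\epsilon$ is $W_1Q_2^*-z^{-1}[zW_1Q_2^*]_+$, whose expansion contains only delays $\le 0$, so $\E[(y_1(t)-\hat y_1(t))\epsilon(s)^*]=0$ for every $s\le t-1$; since $\mathrm{past}(\epsilon)=\mathrm{past}(y_2)$, the residual is orthogonal to the strict past of $y_2$, which characterizes the Wiener predictor. The phrase ``all solutions'' is accounted for by noting that $G_2^{-L}$ is non-unique when $p>r$, yet every left inverse agrees on the range of $G_2$, so $G_2^{-L}y_2=G_2^{-L}G_2\epsilon=\epsilon$ is independent of the choice and all admissible $F_+$ produce the same filter output on $y_2$.

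The step requiring the most care is the shift bookkeeping in the projection operator: justifying that projecting onto the \emph{strict} past of $\epsilon$ produces the strictly causal symbol $z^{-1}[zW_1Q_2^*]_+$ and not the causal part $[zW_1Q_2^*]_+$, and that this composes correctly with the delay $z^{-1}$ and with $G_2^{-L}$ to reproduce $F_+$ and with $Q_2$ to reproduce $K_+$. Equivalently, one must keep straight the two distinct roles of $Q_2$ (the inner factor relating $e$ to $\epsilon$) and of $G_2^{-L}$ (recovering $\epsilon$ from $y_2$), and verify that the left-inverse ambiguity is immaterial; the remaining identities are routine once the innovation representation and the equality of pasts are in place.
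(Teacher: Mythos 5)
Your proposal is correct and takes essentially the same route as the paper's proof: both pass to the innovation representation $y_2(t)=G_2(z)e_2(t)$ with $e_2=Q_2e$ via the outer--inner factorization \eqref{W2G2Q2}, identify the shifted cross spectrum $zW_1Q_2^*$ as the symbol whose causal part, composed with $G_2^{-L}$, gives $F_+$ in \eqref{F+}, and then obtain $K_+$ in \eqref{K+} from the residual $y_1(t)-F_+y_2(t-1)$. The only difference is one of detail, not of method: you spell out the orthogonality check and the immateriality of the choice of left inverse $G_2^{-L}$, which the paper subsumes by invoking the standard one-step-ahead Wiener filter formula.
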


\begin{proof}
As in Figure~\ref{FigWiener}, a general one-step ahead Wiener filter is
$$
   \mathbb{E}\{y_1(t)\mid \Hb_{t-1}^-(y_2)\}= [S(z)]_+G_2(z)^{-L}y_2(t).
$$
where $S(z)$ denotes the map from $e_2(t-1)$ to $y_1(t)$, $G_2$ is a full-rank minimum-phase factor of $\Phi_{22}$ with $e_2(t)$ the corresponding innovation, i.e.,
\begin{equation}\label{y2G2e2}
y_2(t)=G_2(z)e_2(t).
\end{equation}
From \eqref{yW1W2e}\eqref{W2G2Q2} and \eqref{y2G2e2},
$$
    y_1(t)=W_1(z)e(t)=zW_1(z)Q_2^*(z)e_2(t-1),
$$
and hence we have $S(z)=zW_1Q_2^*$ and \eqref{F+}.
Then \eqref{K+} is obtained from
$$
    K_+e(t)=y_1(t)-F_+y_2(t)=(W_1-z^{-1}F_+W_2)e(t),
$$
from which the theorem follows.
\end{proof}

Note that the two parts of the right hand side of \eqref{Feedplus} are always orthogonal. From \eqref{y2G2e2}, $\Hb_t(y_2) = \Hb_t(e_2)$. Since $e_2(t)=Q_2(z)e(t)$ with $Q_2(z)$ inner, $\Hb_t(e_2) \subset \Hb_t(e)$.
Hence $F_+y_2(t-1)$ is in the projection of $\Hb_{t-1}(e_2)$ onto the space $\Hb^2$.
And $K_+e(t)$ denotes the other parts of $\Hb(y_1)$, including the ones in $\Hb_{t-1}(e_2) /\Hb^2$, $\Hb_t(e_2)$, and  $\Hb_t(e)/\Hb_t(e_2)$.

From \cref{thmWiener} we see that $W_2$ is needed in order to calculate the Wiener filter. This is easily realized when $W_1$ and $W_2$ are scalar, through a scalar coprime factorization.
Now with the help of \cref{thmSPdt}, $W_2$ can be obtained in matrix case, and hence a Wiener filter from $y_2$ to $y_1$ can be calculated in general.
Meanwhile, a forward loop with a strictly causal stable transfer function $F:=z^{-1}F_+$
is obtained as well, which can be applied to where (Granger) causality \cite{CainesBook} or the interconnections between nodes are needed \cite{BGHP-17}. 
 An instructive numerical example on identification and calculating the Wiener filter by using our spectral factorization approach will be given in \cref{exp3}.

\section{Spectral factorization in continuous time}\label{secSPct}
In the continuous-time setting $W(s)$ is {\em }minimum-phase, i.e. outer, if and only if all its poles are in the open left half plane and all its zeros are in the closed left half plane \cite[Chapter 5.3.1]{LPbook}.

In this section the symbol \eqref{symb} denotes
$$
    \left[\begin{array}{c|c}A &B\\\hline C&D\end{array}\right]:= C(sI-A)^{-1}B+D.
$$
Next we provide the continuous-time counterpart of \cref{thmCpFactdt}. 

\begin{corollary}\label{thmCpFactct}
Given an arbitrary rational matrix $T(s)$ with a minimal realization \eqref{minRlz}
where the eigenvalues of $A_u$, $A_s$ respectively correspond to the unstable (i.e., in the open right half plane) and stable (i.e., in the closed left half plane and at infinity) poles of $T(s)$. \\
Then the left-coprime factorization with an inner denominator with respect to the imaginary axis has a solution
\begin{equation}
\label{Scont}
T(s)=T_D(s)^{-1}T_N(s)
\end{equation}
of minimal degree $n_u$ if and only if the Lyapunov equation
 \begin{equation}\label{Lyapct}
     A_u^*X + XA_u -C_u^*C_u=0.
 \end{equation}
has an invertible Hermitian solution $X$. In this case, the class of all solutions is given by
\begin{subequations}
\begin{equation}
    T_N(s)=\left[\begin{array}{cc|c}
                 A_u+MC_u & A_{us}+MC_s & B_u+MD \\
                 0 & A_s & B_s \\ \hline
                 PC_u & PC_s & PD
               \end{array}\right],
\end{equation}
\begin{equation}
    T_D(s)=\left[\begin{array}{c|c}
           A_u+MC_u & M \\ \hline
           PC_u & P
         \end{array}\right],
\end{equation}
where
\begin{equation}
    M=-X^{-1}C_u^*,
\end{equation}
and $P$ is a unitary matrix.
\end{subequations}
\end{corollary}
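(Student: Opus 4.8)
\emph{Plan.} This statement is the continuous-time mirror of \cref{thmCpFactdt}, so the plan is to repeat that argument with the open left half-plane replacing the open unit disc: specialize the general $J$-all-pass left-coprime factorization of \cite[Theorem 6.2]{Oara99} to the inner case $J=I$, which collapses its governing quadratic matrix equation to the linear Lyapunov equation \eqref{Lyapct}. Because the realization symbol \eqref{symb} differs from the convention in \cite{Oara99}, the first chore is bookkeeping: rewrite $A_u,A_{us},A_s,B_u,B_s,C_u,C_s,D$ in the notation of \cite{Oara99}, read off the specialized formulas, and translate back. I would, however, prefer a self-contained verification, since the continuous-time formulas are markedly simpler than \eqref{RM}: the factor $(1-z)$ and the term $(I-A_u)^{-*}$ both disappear, leaving $M=-X^{-1}C_u^{*}$.

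\emph{Sufficiency.} Assume \eqref{Lyapct} has an invertible Hermitian solution $X$ and put $M=-X^{-1}C_u^{*}$. First I would check that $T_D$ is stable: substituting $C_u^{*}C_u=A_u^{*}X+XA_u$ gives $A_u+MC_u=-X^{-1}A_u^{*}X$, which is similar to $-A_u^{*}$ and hence stable, because $A_u$ carries only the right-half-plane poles of $T$. Next I would show that $T_D$ is inner with respect to the imaginary axis via the continuous-time lossless (all-pass) lemma: taking the Gramian candidate $Y=X$ with $F=A_u+MC_u$, $B=M$, $C=PC_u$, $D=P$, a one-line computation reduces the three conditions $F^{*}Y+YF+C^{*}C=0$, $B^{*}Y+D^{*}C=0$ and $D^{*}D=I$ to exactly \eqref{Lyapct} together with unitarity of $P$. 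Since $X$ is the unique solution of $F^{*}Y+YF+C^{*}C=0$ for the stable $F$, it is the observability Gramian of $(PC_u,F)$ and therefore $X>0$; this is precisely where invertibility of $X$ is tied to observability of $(C_u,A_u)$, i.e.\ to minimality of \eqref{minRlz}. Finally I would verify the factorization identity $T_N=T_DT$ by forming the cascade realization of $T_DT$, of order $2n_u+n_s$, and applying the state similarity $z_1=x_1+x_2$ that renders the $n_u$ redundant modes unobservable; deleting them returns exactly the order-$(n_u+n_s)$ realization displayed for $T_N$. Left-coprimeness of $(T_D,T_N)$ and minimality of the degree then follow as in the discrete-time case, the count $\deg T_D=n_u$ matching the $n_u$ unstable poles of $T$ supplied by $T_D^{-1}=T_D^{*}$.

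\emph{Necessity and uniqueness.} For the converse I would argue that the denominator of any minimal-degree inner-denominator factorization must have its $n_u$ poles cancel precisely the unstable poles of $T$, whose associated Gramian then provides an invertible Hermitian solution of \eqref{Lyapct}; this is the implication I would cite from \cite[Theorem 6.2]{Oara99} rather than rederive. Uniqueness of $X$ I would obtain as in the proof of \cref{thmSPdt}: the spectra of $A_u^{*}$ and $-A_u$ are disjoint (right versus left half-plane) because $A_u$ is anti-stable, so \eqref{Lyapct}, read as a Sylvester equation, has a unique solution by \cref{thmSylvUnique}.

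\emph{Main obstacle.} I expect the genuine work to be twofold. The routine-but-delicate part is the bookkeeping: faithfully matching the $J$-all-pass parametrization of \cite[Theorem 6.2]{Oara99} to the present inner-denominator formulas under a different realization convention, and carrying out the similarity-and-truncation that collapses $T_DT$ to $T_N$ without discarding a controllable and observable mode. The only implication that is not a direct computation is necessity — that the existence of a minimal-degree inner-denominator factorization forces \eqref{Lyapct} to be solvable with an invertible $X$ — and there I would lean on the cited general theorem.
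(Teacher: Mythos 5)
Your proposal is correct, and it actually proves more than the paper does. The paper offers no proof of this corollary at all: it is presented as the continuous-time counterpart of \cref{thmCpFactdt}, which in turn is stated without proof as a specialization of \cite[Theorem 6.2]{Oara99} (restricting the $J$-all-pass denominator to be inner and adjusting for the different realization convention). Your necessity argument and your uniqueness remark via \cref{thmSylvUnique} coincide with this citation-based route, but your sufficiency argument is a genuinely self-contained verification that the paper does not carry out: the identity $A_u+MC_u=-X^{-1}A_u^{*}X$ giving stability of $T_D$, the all-pass lemma with Gramian candidate $X$ reducing exactly to \eqref{Lyapct} plus unitarity of $P$, the observation that $X$ is then the observability Gramian of $(PC_u,\,A_u+MC_u)$ and hence positive definite (sharpening ``invertible Hermitian''), and the cascade-plus-truncation computation showing $T_N=T_D T$. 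All of these steps check out; in the similarity step the correct change of variables is to add the $T_D$-state to the \emph{unstable} substate of $T$ (your ``$z_1=x_1+x_2$'' has an index slip, but the construction is the intended one and does render the $n_u$ redundant modes unobservable). What each route buys: the paper's citation is short and keeps the discrete- and continuous-time statements symmetric, at the cost of asking the reader to translate Oara's conventions; your verification is independent of that translation, exposes why the continuous-time formulas lose the $(1-z)$ and $(I-A_u)^{-*}$ factors, and pins down the sign of $X$. The only part you leave to the citation---necessity of an invertible Hermitian solution for a degree-$n_u$ inner-denominator factorization---is precisely the part the paper also leaves to \cite{Oara99}, so no gap opens relative to the paper's own treatment.
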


Without loss of generality,
suppose the proper function $G_1(s)^\top H(s)^\top$ has a minimal realization
\begin{equation}\label{ctG1H}
    G_1(s)^\top H(s)^\top=\left[\begin{array}{cc|c}A_u & A_{us} & B_u \\ 0 & A_s & B_s\\
    \hline C_u & C_s&D\end{array}\right],
\end{equation}
where the eigenvalues of $A_u$, $A_s$ respectively correspond to the unstable and stable poles of $G_1(s)^\top H(s)^\top$.


Finally we present the continuous-time version of \cref{thmSPdt}.

\begin{theorem}\label{thmSPct}
Given a low-rank rational spectral density $\Phi(s)$ partitioned as in \eqref{Phi}, a minimum-phase square spectral factor $G_1(s)$ of $\Phi_{11}(s)$, and a minimal realization \eqref{ctG1H},
then the Lyapunov equation \eqref{Lyapct} has a unique invertible Hermitian solution $X$,
and all the minimum-phase full column rank spectral factors $W(s)$ are given by
\begin{equation}
    W(s)=\begin{bmatrix}
           G_1(s)Q_1(s) \\
           W_2(s)
         \end{bmatrix},
\end{equation}
with
 \begin{subequations}\label{W2Q1ct}
 \begin{equation}\label{W2ct}
    W_2(s)=\left[\begin{array}{cc|c}
     A_u+MC_u & A_{us}+MC_s & B_u+MD \\
     0 & A_s & B_s\\ \hline
     PC_u & PC_s & PD
     \end{array}\right]^\top
 \end{equation}
 \begin{equation}\label{Q1ct}
    Q_1(s)= \left[\begin{array}{c|c}
        A_u+MC_u & M \\ \hline
        PC_u & P
    \end{array}\right]^\top
 \end{equation}
 where
 \begin{equation}
    M=-X^{-1}C_u^*,
 \end{equation}
$P$ is an arbitrary unitary matrix, and the other matrices are as in the minimal realization \eqref{ctG1H},
where
\begin{equation}
    H(s)=\Phi_{21}\Phi_{11}^{-1}.
\end{equation}
 \end{subequations}
\end{theorem}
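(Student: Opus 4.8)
The plan is to obtain \cref{thmSPct} as the continuous-time mirror of \cref{thmSPdt}, substituting the Stein equation \eqref{Steindt} by the Lyapunov equation \eqref{Lyapct} and using \cref{thmCpFactct} in place of \cref{thmCpFactdt}. The logical skeleton is unchanged: first establish existence, uniqueness, and invertibility of $X$; then feed the minimal realization \eqref{ctG1H} into the continuous-time coprime-factorization corollary to produce $W_2(s)$ and the inner denominator $Q_1(s)$; and finally argue that the resulting family, as $P$ ranges over unitary matrices, exhausts all minimum-phase full-column-rank factors.

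First I would dispose of the solvability of \eqref{Lyapct}. Because \eqref{ctG1H} is a minimal realization, the pair $(C_u,A_u)$ is observable, and the eigenvalues of $A_u$ lie strictly in the open right half plane while those of $-A_u^*$ lie strictly in the open left half plane. The operator $Y\mapsto A_u^*Y+YA_u$ is invertible exactly when $A_u^*$ and $-A_u$ share no common eigenvalue, which holds here since all eigenvalues of $A_u$ have positive real part; the continuous-time analogue of \cref{thmSylvUnique} in \cref{apdxB} then gives a unique Hermitian $X$. Invertibility of $X$ follows from observability of $(C_u,A_u)$ together with the fact that $A_u$ has no eigenvalues on the imaginary axis: a nontrivial kernel of $X$ would, by the standard observability-Gramian argument applied to \eqref{Lyapct}, force an unobserved mode, contradicting minimality.

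With $X$ in hand, I would invoke \cref{thmCpFactct} applied to $T(s)=G_1(s)^\top H(s)^\top$ with realization \eqref{ctG1H}, obtaining the left-coprime factorization $T=T_D^{-1}T_N$ with inner $T_D$ of minimal degree $n_u$; transposing $T_N$ and $T_D$ yields exactly \eqref{W2ct} and \eqref{Q1ct}, precisely as the transpose step was used in \cref{thmSPdt}. The identity $H(s)G_1(s)=W_2(s)Q_1(s)^{-1}$ from \eqref{CopInn} shows that this coprime factorization solves the spectral-factorization problem, so $W(s)=[\,(G_1Q_1)^\top,\;W_2^\top\,]^\top$ is a genuine minimum-phase full-column-rank factor with $Q_1$ inner. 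That the whole family is captured follows from \cref{thmUnique}: any two minimum-phase full-column-rank factors differ by right multiplication by a constant unitary, and the free unitary $P$ supplies exactly this degree of freedom, so \eqref{W2Q1ct} lists all of them.

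I expect the only genuinely continuous-time obstacle to be the behavior at infinity. In discrete time the unstable region is the exterior of the closed unit disc including the point at infinity, whereas \cref{thmCpFactct} classifies the point at infinity as \emph{stable}; one must check that the realization \eqref{ctG1H} correctly places the improper or high-frequency behaviour of $G_1^\top H^\top$ into the stable block $(A_s,B_s,C_s,D)$, and that $T_D$ remains inner with respect to the imaginary axis (rather than the unit circle) under this convention. Once the pole-at-infinity bookkeeping is reconciled with the hypothesis that \eqref{ctG1H} is minimal and $A_u$ collects exactly the open-right-half-plane poles, the remaining manipulations are routine transpositions and substitutions identical in form to the discrete-time proof.
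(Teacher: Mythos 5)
Your proposal is correct and follows essentially the same route as the paper: uniqueness of $X$ via \cref{thmSylvUnique}, invertibility from observability of $(C_u,A_u)$ implied by minimality, the factorization \eqref{W2Q1ct} obtained from \cref{thmCpFactct} after transposition, and completeness of the family via \cref{thmUnique} with the unitary $P$ factoring out on the right --- exactly the discrete-time argument of \cref{thmSPdt} to which the paper's own (very terse) proof defers. Your closing concern about the point at infinity is moot, since \eqref{ctG1H} is by hypothesis a minimal realization of the \emph{proper} function $G_1^\top H^\top$, so the behaviour at infinity automatically sits in the stable block and no extra bookkeeping is needed.
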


\begin{proof}
That the Lyapunov equation \eqref{Lyapct} has a unique solution can be shown by \cref{thmSylvUnique}.
Then \eqref{W2Q1ct} can be obtained from \cref{thmCpFactct}.
The other parts of this proof are similar to those in the proof of \cref{thmSPdt}, and therefore we omit them here.
\end{proof}

Approaches to solve a Lyapunov equations can be found in \cite{Chu87}\cite{BS72}.
Moreover, Theorem~\ref{thmSPct} works also for the special case when $H(s)$ is stable, by deleting the unstable parts in \eqref{W2Q1ct}.

\section{Numerical Examples}\label{secExamples}
In this section we shall give three examples illustrating the theoretical results above. First we shall give an example of the special case of subsection 3.2, showing the convenience of this necessary and sufficient condition.
Next, more general spectral factorization examples separately in continuous-time case and discrete-time case will be given.
In the example for discrete-time case, the spectral factorization will be applied to a singular process identification problem, where the forward loop of the feedback structure with $F_+(z):=zF(z)$ a one-step Wiener filter is calculated as well.

\subsection{Example 1: a special case in the discrete time}

Suppose we are given a minimum-phase factor $G_1(z)$ of $\Phi_{11}(z)$ and the deterministic relation function $H(z)$ as
$$
    G_1(z)=\begin{bmatrix}
             \frac{(z+0.4)(z+0.3)}{(z+0.2)(z+0.1)} & 0 \\
             0 & \frac{z+0.3}{z+0.5}
           \end{bmatrix}, \quad H(z)=\begin{bmatrix}
                                       \frac{z+0.1}{z+0.3} & 1
                                     \end{bmatrix}.
$$
$H(z)$ has only one stable pole $-0.3$, hence it is stable.
From Theorem~\ref{thmHW1}, a minimum-phase factor $W(z)$ can be directly given by
$$
    W(z)=\begin{bmatrix}
           G_1(z) \\
           H(z)G_1(z)
         \end{bmatrix}=\begin{bmatrix}
             \frac{(z+0.4)(z+0.3)}{(z+0.2)(z+0.1)} & 0 \\
             0 &  \frac{z+0.3}{z+0.5} \\
             \frac{z+0.4}{z+0.2} & \frac{z+0.3}{z+0.5}
           \end{bmatrix},
$$
without solving a coprime factorization.
And any full column rank minimum-phase factor of this density can be obtained from $W(z)Q_1$, where $Q_1$ is a unitary constant matrix.

The poles of the above $W(z)$ are $-0.5, -0.2, -0.1$, and the zeros are $-0.4, -0.3$, showing that $W(z)$ is minimum-phase, in harmony with Theorem~\ref{thmHW1}.

\subsection{Example 2: continuous-time case}
In this example, we shall introduce some tricks of calculation in the continuous-time case.  To illustrate the generality of our approach, we have  an example with different types of poles.

Suppose we start from a minimal stable factor $W_o(s)=[{W}_{o1}(s)^\top {W}_{o2}(s)^\top]^\top$,
\begin{equation}\nonumber
W_{o1}(s)=\begin{bmatrix}
  \frac{(s+1)(s-2)(s^2-2s+2)}{(s+3)(s+4)(s^2+2s+2)} &  0 \\
  0 & \frac{(s+3)(s-1)}{(s+1)(s+5)}
\end{bmatrix},
\end{equation}
\begin{equation}\nonumber
 W_{o2}(s)=\begin{bmatrix}
              \frac{(s+1)^2(s-2)(s+2)}{(s+4)^2(s^2+2s+2)}   &
              \frac{s+3}{s+4}
            \end{bmatrix},
\end{equation}
with ${W}_{o1}$ full-rank.

First we impose an outer-inner factorization on ${W}_{o1}(s)$ without here dwelling on the computational method to obtain ${W}_{o1}(s)=G_1(s)Q(s)$, where
$$
    G_1(s)=\begin{bmatrix}
             \frac{(s+1)(s+2)}{(s+3)(s+4)} & 0 \\
             0 & \frac{s+3}{s+5}
           \end{bmatrix}
$$
is minimum-phase and 
$$
 Q(s)=\begin{bmatrix}
           \frac{(s^2-2s+2)(s-2)}{(s^2+2s+2)(s+2)} & 0 \\
           0 & \frac{s-1}{s+1}
           \end{bmatrix}
$$
is an inner function matrix.

When the expression of ${W}_{o1}$  is more complex than that of $G_1$, of which we need to calculate the inverse, we may transform the matrix ${W}_{o}(s)$ into ${W}_{o}(s)Q^*(s)$ to calculate $H(s)$. This is feasible because the deterministic function $H(s)$ remains the same as the spectral factor changes (Theorem~\ref{lem1}).
By transformation,
\begin{equation}\label{ex2HG1}
    {W}_{o2}(s)Q^*(s)=\begin{bmatrix}
                   \frac{(s+1)^2(s+2)^2}{(s+4)^2(s^2-2s+2)} & \frac{(s+1)(s+3)}{(s-1)(s+4)}
                 \end{bmatrix},
\end{equation}
and then $H(s)$ is easily obtained by
\begin{equation}\nonumber
\begin{split}
    H(s)&=G_1(s)^{-1}{W}_{o2}(s)Q^*(s)\\
    &= \begin{bmatrix}
         \frac{(s+1)(s+2)(s+3)}{(s+4)(s^2-2s+2)} & \frac{(s+1)(s+5)}{(s-1)(s+4)}
       \end{bmatrix}.
\end{split}
\end{equation}

However in this case, given ${W}_{o}$ and the outer-inner factorization results, we may skip the step of calculating $H(s)$.
That is, we have $H(s)G_1(s)={W}_{o2}(s)Q^*(s)$ as in \eqref{ex2HG1} directly, with distinct unstable poles $1+i, 1-i, 1$, and a stable pole $-4$ with degree $2$. From the calculations in \cref{apdxExp}, a possible minimal realization of $G_1(s)^\top H(s)^\top$ is \eqref{exp_ctrlz},
where clearly 
$$
    A_u=\begin{bmatrix}
        0 & 1 & 0 \\ -2& 2& 0 \\ 0 & 0 & 1
    \end{bmatrix},
    ~A_s=\begin{bmatrix}
                 -4 & 1 \\
                 0 & -4
               \end{bmatrix},~A_{us}={\bf{0}}_{3*2},~B_u=\begin{bmatrix}
                   0 \\ 25/169 \\ 8/5
               \end{bmatrix},~B_s=\begin{bmatrix}
         10/13\\
           -3/5
          \end{bmatrix},
$$
$$
    C_u=\begin{bmatrix}
          -7 & 12& 0\\
           0 & 0 & 1
        \end{bmatrix},~C_s=\begin{bmatrix}
                             -30/13& 0 \\
                             0 & 1
                           \end{bmatrix},~D=\begin{bmatrix}
         1 \\
          1
          \end{bmatrix}.
$$
Plugging $A_u$ and $C_u$ into \eqref{Lyapct}, we have the unique Hermitian solution to the Lyapunov equation
$$
X=\begin{bmatrix}
     99/4& -49/4 & 0 \\
     -49/4& 337/8 &0 \\
    0 & 0 & {1}/{2}
\end{bmatrix}.
$$
Hence from Theorem~\ref{thmSPct} and equation \eqref{W2Q1ct}, the set of the solutions  is
$$
    M=\begin{bmatrix}
                      28/169 & 0\\
                     - 40/169 & 0\\
                     0 & -2
                   \end{bmatrix},
$$
\begin{equation}\nonumber
    W_2(s) =\begin{bmatrix}
        \displaystyle{\frac{(s+2)^2(s+1)^2}{(s+4)^2(s^2+2s+2)}}, & 
        \displaystyle{\frac{s+3}{s+4}}
      \end{bmatrix} P^\top,
\end{equation}
\begin{equation}\nonumber
    Q_1(s)=\begin{bmatrix}
        \frac{s^2-2s+2}{s^2+2s+2} & 0  \\
        0 & \frac{s-1}{s+1}
    \end{bmatrix}P^\top,
\end{equation}
where $P$ is any $2\times 2$ unitary matrix. Choosing $P$ to be the identity matrix, we have
the minimum-phase spectral factor,
\begin{equation}\nonumber
\begin{split}
    W(s)=\begin{bmatrix}
           G_1(s)Q_1(s) \\
           W_2(s)
         \end{bmatrix}=\begin{bmatrix}
                        \frac{(s+1)(s+2)}{(s+3)(s+4)} & 0 \\
                         0 & \frac{(s-1)(s+3)}{(s+1)(s+5)} \\
                         \frac{(s+2)^2(s+1)^2}{(s+4)^2(s^2+2s+2)} & \frac{s+3}{s+4}
                       \end{bmatrix},
\end{split}
\end{equation}
with poles $-1, -3, -4, -4, -5$ and zeros $-1, -2, -3$.

Note that if $H$ is given, $W_2$ can also be calculated from $HG_1Q_1$ instead of \eqref{W2ct}.
This continuous-time example shows that when we start from a non-minimum-phase factor, the computation can be further simplified by cutting down calculating the inverse of an $m\times m$ matrix, either by skipping calculating $H$ or by calculating $W_2=HG_1Q_1$ instead of \eqref{W2ct} (\eqref{W2dt} in discrete time).

\subsection{Example 3: discrete-time case and the application to identification}\label{exp3}
In this example, we shall solve the minimum-phase spectral factorization problem in discrete-time through Theorem~\ref{thmSPdt}, in an application of a singular process identification.
For simplicity, we identify a singular process of rank $1$.
Note that in the following,  $G_1(z)$, $H(z)$, $W(z)$, etc. denote estimates rather than functions, but are kept for theoretical clarity.

Consider a three-dimensional process $y(t)$ of rank $1$ described by
$$
    y(t)=W_o(z)e(t)=\begin{bmatrix}
                      W_{o1}(z) \\
                      W_{o2}(z)
                    \end{bmatrix}e(t),
$$
where $e$ is a zero mean white Gaussian scalar noise of variance $\lambda^2 = 1$, and the two blocks of function $W_o(z)$ are
$$
W_{o1}(z)= \frac{z+2}{5z-1}, \quad W_{o2}(z)=\begin{bmatrix}
  \frac{z-2}{5z-1} &
  \frac{z-1}{5z-1}
\end{bmatrix}^\top.
$$
From these we obtain the transfer function
$$
    H_o(z)=\begin{bmatrix}
           \frac{z-2}{z+2} & \frac{z-1}{z+2}
         \end{bmatrix}^\top.
$$
We have generated 100 samples of the three-dimensional time series with $N = 500$ data points $\{y(t):=[y_1(t), y_2(t)^\top]^\top\in \Rbb^3;~t=1, \cdots, N\}$ in MATLAB, where $y_1(t)$ denotes the first scalar entry of $y(t)$, and $y_2(t)$ the remaining two in sequence.
With these data we shall successively identify a minimum-phase factor for the process $y_1$, the deterministic relation function $H_o(z)$, and calculate the estimates of a minimum-phase factor of $y$ as well as the Wiener fliter from $y_2$ to $y_1$.

Suppose the orders of functions are known. First we estimate an ARMA model of $y_1$ and obtain
$$
    \hat{y}_1(t)-0.255y_1(t-1)=e_1(t)+0.528e_1(t-1),
$$
with $e_1(t)$ an innovation of $y_1(t)$. Hence we have the estimate
$$
    G_1(z)=\frac{z+0.528}{z-0.255},
$$
which is minimum-phase.
Then identifying $H_o(z)$ by imposing a deterministic relation between the data of $y_1(t)$ and $y_2(t)$,
a consistent and nearly precise estimate is given by a least squares method,
$$
    H(z)=\begin{bmatrix}
           \frac{z-2.000}{z+2.000} & \frac{z-1.000}{z+2.000}
         \end{bmatrix}^\top.
$$
For more details on identification, see our work \cite{CPLauto21}.

Then we use Theorem~\ref{thmSPdt} to calculate the estimate of a minimum-phase factor of $y(t)$.
A minimal realization of $G_1(z)^\top H(z)^\top$ can be given as
\begin{equation}\nonumber
\begin{split}
    G_1(z)^\top H(z)^\top = \left[\begin{array}{cc|cc}
        -2 & 0  & 2.611 & 1.958  \\
        0 & 0.255  & 0.606 & 0.259 \\ \hline
        -1 & -1 & 1 & 1
    \end{array}\right],
\end{split}
\end{equation}
where we choose $C=[-1,-1]$ to simplify the calculation. The matrices $A_u$, $A_s$, $B_u$, etc. are obtained by a procedure, omitted here, similar to the one in Example 2. The unique solution to the Stein equation \eqref{Steindt} is
$$
    X=-1/3.
$$
Hence from Theorem~\ref{thmSPdt}, by setting $P=I$, we have
$$
    M=-1, \quad Q_1(z)=\frac{z+2}{2z+1}
$$
and one of the estimated minimum-phase factors is
\begin{equation}\nonumber
\begin{split}
    W(z) &=\begin{bmatrix}
            1 \\
            H(z)
          \end{bmatrix}G_1(z)Q_1(z)\\
          &=\frac{(z+0.528)}{(2z+1)(z-0.255)}\begin{bmatrix}
              z+2 \\ z-2 \\ z-1
            \end{bmatrix},
\end{split}
\end{equation}
with poles $-0.5, 0.255$ and a zero $-0.528$.

Next we will use the above results to estimate a canonical forward loop of the feedback structure with a Wiener filter.
From the above we have the partition
$$
    W_1(z)= \frac{(z+2)(z+0.528)}{(2z+1)(z-0.255)},
$$
$$
    W_2(z)=\begin{bmatrix}
             \frac{(z-2)(z+0.528)}{(2z+1)(z-0.255)} & \frac{(z-1)(z+0.528)}{(2z+1)(z-0.255)}
           \end{bmatrix}^\top,
$$
where $W_2$ is minimum-phase. Hence an outer-inner factorization of $W_2$ is $W_2=G_2Q_2$, with $G_2=W_2$, $Q_2=1$.
Then from Theorem~\ref{thmWiener}, a one-step ahead Wiener filter from $y_2$ to $y_1$ is
\begin{equation}\nonumber
\begin{split}
    F_+(z)&=[zW_1(z)Q_2(z)^*]_+W_2(z)^{-L}\\
        &= \frac{z(2.283z+1.184)}{(z+0.528)(3z^2-7z+3)} \begin{bmatrix}
                                                       2z-1, & z-1
                                                     \end{bmatrix}.
\end{split}
\end{equation}
by defining the pseudo-inverse $W_2^{-L}=(W_2^*W_2)^{-1}W_2^*$,
where
\begin{equation}\nonumber
    [zW_1(z)]_+=\frac{z(2.283z+1.184)}{(2z+1)(z-0.255)}.
\end{equation}
Then from \eqref{K+},
\begin{equation}\nonumber
\begin{split}
   K_+=W_1-z^{-1}[zW_1]_+=0.5.
\end{split}
\end{equation}
Since $F:=z^{-1}F_+$ is strictly causal, it is easy to verify that $K_+e(t)$ is orthogonal to $F(z)y_2(t)$.

\section{Conclusion}\label{secCon}

A novel low rank rational spectral factorization approach is proposed in this paper. The deterministic relation in the spectral factor and the coprime factorization with an inner factor are used to calculate the full-rank minimum-phase spectral factor efficiently. The application of the algorithm in identifying low-rank processes is introduced, where an innovation model and also the internal Wiener filter can be estimated. Examples show the feasibility and convenience of our approach.

\appendix
\section{Details on matrix zeros and tall minimum-phase matrix functions}\label{apdxA}
In this section we give more details on zeros and other properties of tall minimum-phase matrix functions.

\begin{definition}[zeros of a function matrix] Given an $m\times p$ function matrix $W(\lambda)$, a complex number $\alpha$ in the region where $W$ is analytic is a (right) zero of $W$ if there is a nonzero vector $v\in \Cbb^p$, such that
$$
    W(\alpha)v=0.
$$
\end{definition}

Consider now the outer-inner factorizations
\begin{subequations}
\begin{align}\nonumber
  W_1(\lambda) &= G_1(\lambda)Q_1(\lambda),  \\ \nonumber
  W_2(\lambda) &= \hat{G}_2(\lambda)\hat{Q}_2(\lambda),
\end{align}
\end{subequations}
where $G_1$, $\hat{G}_2$ are the outer (minimum-phase) factors and  $Q_1, \hat{Q}_2$ are square inner (in fact matrix Blaschke products). The question we want to discuss is: if $W$ is outer, does it follow that any (or both) of the two components $W_1, W_2$ are also outer? We shall see that the answer is in general negative.

Let us recall that the full-column-rank matrix function $W(z)\in \Hb_{(p+m),m}^2$ is outer, if the row-span
$$
    \overline{\rm span}(W) :=\overline{\rm span}\{\phi(z)W(z);~\phi\in\Hb^{\infty}_{(p+m)}\}
$$
is the whole space $\Hb^2_m$.
 The {\it  greatest common right  inner divisor} of two inner functions $Q_1$  and $ \hat{Q}_2 $, see \cite[p. 188 top]{Fuhrmann-86}  is denoted $Q_1\wedge_R \hat{Q}_2$. This is the inner function representative of the closed vector sum $\Hb^2_{m}Q_1\vee \Hb^2_{m}\hat{Q}_2$.

\begin{theorem}
Let a full-column-rank matrix function  $W(z)\in \Hb_{(p+m),m}^2$ be partitioned as in \eqref{W1W2}. Then $W$ is outer if and only $Q_1$  and $\hat{Q}_2$ are right-coprime, i.e.,  the {\it greatest common right  inner divisor} of $Q_1$  and $ \hat{Q}_2 $  is the identity, i.e. $Q_1 \wedge_R \hat{Q}_2=I_m$.
\end{theorem}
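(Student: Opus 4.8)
The plan is to translate the outer property of $W$ into the language of shift-invariant subspaces of $\Hb^2_m$ and to recognize $\overline{\rm span}(W)$ as the closed vector sum of the row-spans of the two blocks $W_1$ and $W_2$. Writing a generic row multiplier as $\phi=[\phi_1~\phi_2]$ with $\phi_1\in\Hb^{\infty}_m$ and $\phi_2\in\Hb^{\infty}_p$, we have $\phi W=\phi_1 W_1+\phi_2 W_2$, and since the defining set of $\overline{\rm span}(W)$ is a linear span (so that taking $\phi_2=0$ or $\phi_1=0$ recovers each block separately) we obtain at once
\[
\overline{\rm span}(W)=\overline{\rm span}(W_1)\vee\overline{\rm span}(W_2).
\]
Thus the whole problem reduces to identifying each of the two summands in terms of the inner factors $Q_1$ and $\hat{Q}_2$.

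The key step is to prove that $\overline{\rm span}(W_1)=\Hb^2_m Q_1$ and $\overline{\rm span}(W_2)=\Hb^2_m \hat{Q}_2$. Substituting the outer--inner factorizations gives $\phi_1 W_1=(\phi_1 G_1)Q_1$ and $\phi_2 W_2=(\phi_2 \hat{G}_2)\hat{Q}_2$. Since $G_1$ is square outer, $\overline{\rm span}\{\phi_1 G_1\}=\Hb^2_m$; because right-multiplication by the inner matrix $Q_1$ is isometric on $\Hb^2_m$ it maps closed sets to closed sets, so the density of the outer part transfers and the closure of $\{(\phi_1 G_1)Q_1\}$ is exactly $\Hb^2_m Q_1$. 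The same reasoning applied to $\hat{G}_2,\hat{Q}_2$ yields the analogous identity for $W_2$. I would isolate as a short lemma the statement that right-multiplication by an inner matrix carries a dense subset of $\Hb^2_m$ onto a full-range shift-invariant subspace, so that cyclicity of the outer factor propagates through the inner factor.

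With both identifications in hand, the conclusion is essentially the definition of the greatest common right inner divisor recalled just before the statement:
\[
\overline{\rm span}(W)=\Hb^2_m Q_1\vee\Hb^2_m \hat{Q}_2=\Hb^2_m\,(Q_1\wedge_R\hat{Q}_2).
\]
Hence $W$ is outer, i.e. $\overline{\rm span}(W)=\Hb^2_m$, if and only if $\Hb^2_m(Q_1\wedge_R\hat{Q}_2)=\Hb^2_m$; by Beurling--Lax--Halmos this forces the inner divisor to be a constant unitary, which after normalization is $Q_1\wedge_R\hat{Q}_2=I_m$, i.e. right-coprimeness of $Q_1$ and $\hat{Q}_2$. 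Both implications are then obtained simultaneously, with no separate sufficiency/necessity arguments needed.

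The hard part will be the second step, and in particular the case where $W_2$ is rank-deficient ($\rank W_2=r<m$): then the outer factor $\hat{G}_2$ is a tall $p\times m$ function whose row-span need not fill $\Hb^2_m$, so one cannot naively assert $\overline{\rm span}\{\phi_2 \hat{G}_2\}=\Hb^2_m$. I would handle this by invoking Beurling--Lax--Halmos directly: $\overline{\rm span}(W_2)$ is a shift-invariant subspace of $\Hb^2_m$ and hence equals $\Hb^2_r\hat{Q}_2$ for an appropriate (possibly rectangular) inner $\hat{Q}_2$ that absorbs all the deficiency, after which the vector sum with the full-range subspace $\Hb^2_m Q_1$ is again full-range and is represented by a square inner divisor, so the computation above applies verbatim. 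The remaining verifications--that right inner multiplication is isometric and that square outer factors are cyclic--are standard and I would treat them as routine.
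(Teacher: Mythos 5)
Your strategy is the same as the paper's: decompose $\overline{\rm span}(W)=\overline{\rm span}(W_1)\vee\overline{\rm span}(W_2)$, use outerness of the square factor $G_1$ to identify $\overline{\rm span}(W_1)=\Hb^2_m Q_1$, and finish with Fuhrmann's identity $\Hb^2_{m}Q_1\vee \Hb^2_{m}\hat{Q}_2= \Hb^2_{m}(Q_1 \wedge_R \hat{Q}_2)$. In the case $\rank W_2=m$, where $\hat{G}_2$ is full-column-rank outer and hence $\overline{\rm span}(\hat{G}_2)=\Hb^2_m$, your argument is complete and essentially identical to the paper's.

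The gap is in your treatment of the rank-deficient case $\rank W_2=r<m$. This is not a corner case: it occurs automatically whenever $p<m$, and the paper's proof explicitly allows $\hat{G}_2$ to be rank deficient. You correctly observe that $\overline{\rm span}(W_2)=\Hb^2_m\hat{Q}_2$ then fails, but your repair proves a statement about a different object than the one in the theorem. The Beurling--Lax--Halmos representative of $\overline{\rm span}(W_2)$ is a rectangular $r\times m$ inner function; it is \emph{not} the square inner factor $\hat{Q}_2$ fixed by the outer--inner factorization recalled just before the statement, and the divisor $Q_1\wedge_R\hat{Q}_2$ in the conclusion is defined (via the displayed vector-sum identity) only for two \emph{square} inner functions. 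So ``the computation above applies verbatim'' is precisely what is missing: you would need a coprimeness notion for the pair consisting of the square $Q_1$ and your rectangular factor, an analogue of Fuhrmann's identity for it, and then a link between the resulting square inner representative of $\Hb^2_m Q_1\vee\Hb^2_r\hat{Q}_2$ and the theorem's $\hat{Q}_2$. That last link cannot be supplied by a generic argument, because a rank-deficient $W_2$ does not determine its square inner factor: for $W_2\equiv 0$ every square inner $\hat{Q}_2$ is admissible (with $\hat{G}_2=0$), and different choices give different divisors $Q_1\wedge_R\hat{Q}_2$. The paper's device for this case is different from yours: it never identifies $\overline{\rm span}(W_2)$ at all, but keeps the square $\hat{Q}_2$ and works with one-sided inclusions, bounding $\overline{\rm span}(W)$ above by $\Hb_m^2Q_1 \vee \Hb_m^2\hat{Q}_2=\Hb^2_{m}(Q_1 \wedge_R \hat{Q}_2)$ via $\overline{\rm span}(\hat{G}_2)\subset\Hb^2_m$, and below by $\Hb^2_m Q_1\subset\overline{\rm span}(W)$; this immediately yields the ``outer implies coprime'' direction in full generality, and even there the converse direction rests on a delicate inclusion chain. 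This underscores that the rank-deficient case is the genuine difficulty of the theorem, not a routine verification, so your proposal should either restrict to full-column-rank $W_2$ or adopt the paper's one-sided-inclusion argument for the general case.
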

\begin{proof}
Since the square full-rank outer matrix function $G_1$ satisfies $\overline{\rm span}(G_1)=\Hb_m^2$, we have
\begin{equation}\nonumber
\begin{split}
    \overline{\rm span}(W) &=\overline{\rm span}(G_1)Q_1 \vee \overline{\rm span}(\hat{G}_2)\hat{Q}_2 \\
    &= \Hb_m^2Q_1 \vee \overline{\rm span}(\hat{G}_2)\hat{Q}_2
\end{split}
\end{equation}
where $\hat{G}_2$ is a $p\times m$ outer matrix, possibly not full-rank, satisfying $\overline{\rm span}(\hat{G}_2) \subset \Hb_m^2$. Hence,
$$
    \Hb_m^2Q_1 \vee \overline{\rm span}(\hat{G}_2)\hat{Q}_2~\subset~\Hb_m^2Q_1 \vee \Hb_m^2\hat{Q}_2.
$$
On the other hand,
$$
    \Hb^2_{m} (Q_1 \wedge_R \hat{Q}_2)~\subset~ \Hb^2_{m} Q_1 ~\subset~ \Hb_m^2Q_1 \vee \overline{\rm span}(\hat{G}_2)\hat{Q}_2.
$$

Follows from the identity see \cite[p. 188 top]{Fuhrmann-86},
$$
\Hb^2_{m}Q_1\vee \Hb^2_{m}\hat{Q}_2=  \Hb^2_{m} (Q_1 \wedge_R \hat{Q}_2),
$$
and from the above, we have
$$
    \overline{\rm span}(W)=\Hb^2_{m} (Q_1 \wedge_R \hat{Q}_2).
$$
Hence $W$ is outer if and only if $Q_1 \wedge_R \hat{Q}_2=I_m$.
\end{proof}

Hence $W(z)\in \Hb_{(p+m),m}^2$ can be   outer even if  none of the two submatrices $W_1$ and $W_2$ is.
They just need to have no non-identity inner divisors in common (no common unstable zeros when $m=1$). On the other hand, when $W_1$ or $W_2$ have no unstable zeros, they are automatically outer.

Next is a proposition discussing the Wiener filter from $y_2(t-1)$ to $y_1(t)$ when $W_2$ is minimum-phase.

\begin{theorem}\label{lemcalF}
  Assume that $W_2$ is minimum-phase. Then  there is a representation \eqref{fbspecial} where  $F$  is stable and strictly causal, that is
$F(z)=z^{-1}\bar{F}(z)$ with $\bar{F}(z)$ causal and stable (analytic in $\{|z| \geq 1\}$) and $K(z)$ is a constant matrix $K_+$. In fact, this
$\bar{F}(z)$
coincides with  the transfer function $F_+(z)$  of the  one-step ahead Wiener predictor based on the strict past of $y_2$,  that is
\begin{equation}\nonumber
F_+(z) y_2(t-1) =\mathbb{E}\{y_1(t)\mid \Hb_{t-1}^-(y_2)\}
\end{equation}
and  the prediction error $\tilde y_1(t):= y_1(t)-F_+(z) y_2(t-1)$ can be written  $K_+ e(t)$ where $e(t)$ is the innovation of   the  joint process $y$.  The  representation
\begin{equation}\nonumber 
y_1(t)= F_+(z) y_2(t-1) +K_+ e(t)
\end{equation}
is  the unique feedback  representation \eqref{y1Fy2v} of $y_1(t)$ in which $e(t)$ is uncorrelated with the strict past of $y_2$.
\end{theorem}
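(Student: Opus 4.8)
The plan is to prove \cref{lemcalF} in three stages: first establish the feedback representation and identify $K(z)$ as constant, then verify that $\bar F(z)$ is indeed the one-step Wiener predictor, and finally argue uniqueness. First I would start from the innovation model $y(t)=W(z)e(t)$ with the partition \eqref{W1W2}, so that $y_1(t)=W_1(z)e(t)$ and $y_2(t)=W_2(z)e(t)$. Since $W_2$ is assumed minimum-phase (outer) and full-row-rank here, it admits a stable causal left inverse $W_2^{-L}$, which lets me solve $e(t)=W_2^{-L}(z)y_2(t)$ and thereby express $y_1(t)=W_1(z)W_2^{-L}(z)y_2(t)$. The transfer function $W_1 W_2^{-L}$ is the natural candidate, but it need not be strictly causal, so I would split off its non-strictly-causal part.

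The key computation is the orthogonal decomposition of $y_1(t)$ relative to the strict past of $y_2$. Writing $e_2(t):=Q_2(z)e(t)$ as in \eqref{y2G2e2} with $Q_2$ inner (here $Q_2$ can be taken so that $G_2=W_2$), I would compute the projection $\mathbb{E}\{y_1(t)\mid \Hb_{t-1}^-(y_2)\}$ exactly as in the proof of \cref{thmWiener}: the map from $e_2(t-1)$ to $y_1(t)$ is $S(z)=zW_1Q_2^*$, and its causal stable part $[S(z)]_+$ composed with $G_2^{-L}=W_2^{-L}$ yields $F_+(z)$, so that $F_+(z)y_2(t-1)$ is the predictor. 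The prediction error is then $\tilde y_1(t)=y_1(t)-F_+(z)y_2(t-1)=(W_1-z^{-1}F_+W_2)e(t)=K_+e(t)$, and the content of the claim is that $K_+$ is constant. I would verify this by the same manipulation underlying \eqref{K+}: since $W_2$ is outer, $Q_2$ is unitary constant and $[zW_1Q_2^*]_+=[zW_1]_+$, so $K_+=W_1-z^{-1}[zW_1]_+$ picks out exactly the constant (degree-zero) term of $W_1$ in its expansion, because $z^{-1}[zW_1]_+$ reconstructs all the strictly-causal-and-analytic content of $W_1$. Setting $F(z):=z^{-1}F_+(z)$ gives the strictly causal stable $F$, and $K(z)=K_+$ the constant gain, establishing the representation \eqref{fbspecial}.

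For uniqueness, I would use the defining orthogonality property: in any representation \eqref{y1Fy2v} where $e(t)$ is uncorrelated with the strict past $\Hb_{t-1}^-(y_2)$, the term $F(z)y_2(t)$ must equal the orthogonal projection of $y_1(t)$ onto $\Hb_{t-1}^-(y_2)$, since $K(z)e(t)$ contributes nothing to that subspace by hypothesis. The projection is uniquely determined by the second-order statistics of the joint process $y$, hence $F(z)y_2(t)$ is unique, and because $W_2$ is outer (so $y_2$ generates the full innovation space and $W_2^{-L}$ is well defined), this pins down $F(z)$ uniquely; the residual then forces $K(z)e(t)$ and hence $K_+$ to be unique as well.

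The main obstacle I anticipate is the careful justification that $K_+$ reduces to a genuine constant matrix rather than merely a stable function. This hinges on the interplay between the shift $z^{-1}$, the projection $[\,\cdot\,]_+$, and the outer property of $W_2$: one must show that $z^{-1}[zW_1]_+$ recovers precisely the part of $W_1$ lying strictly after time zero, leaving only the instantaneous term in $K_+=W_1-z^{-1}[zW_1]_+$. Making this rigorous requires being precise about the Hardy-space projection conventions and about why the outerness of $W_2$ (equivalently $Q_2$ being constant unitary) is exactly what collapses the general Wiener-filter formula of \cref{thmWiener} to this clean constant-gain form; the remaining steps are routine projection-theorem arguments.
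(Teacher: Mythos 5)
You should first be aware that the paper contains no proof of \cref{lemcalF}: the theorem is stated in \cref{apdxA} without argument, being imported from the authors' earlier work \cite{CPLauto21}, so the only material to measure your proposal against is the proof of \cref{thmWiener}, which is exactly what you specialize. Your main line is correct and is the natural route: outerness of $W_2$ forces its inner factor $Q_2$ to be a constant unitary (take $Q_2=I$, $G_2=W_2$, $e_2=e$), so the strict past of $y_2$ coincides with the strict past of the joint innovation $e$; writing $W_1(z)=\sum_{k\ge 0}W_1^{(k)}z^{-k}$, the projection of $y_1(t)$ onto $\Hb_{t-1}^-(y_2)$ is $\sum_{k\ge 1}W_1^{(k)}e(t-k)=[zW_1]_+W_2^{-L}y_2(t-1)$, and the residual is $W_1^{(0)}e(t)$, so that $K_+=W_1-z^{-1}[zW_1]_+=W_1(\infty)$ is indeed a constant matrix. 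One slip to fix: $W_2$ is $p\times m$ and its outerness forces full \emph{column} rank $m$ (hence $p\ge m$), not ``full row rank''; what exists is a causal left inverse.

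The uniqueness part is where your argument has a genuine soft spot. Read literally, the condition ``$e(t)$ is uncorrelated with the strict past of $y_2$'' is satisfied by \emph{every} representation \eqref{y1Fy2v} in which $e$ is the joint innovation, since the innovation is orthogonal to the entire joint past $\Hb_{t-1}(y)\supseteq \Hb_{t-1}^-(y_2)$; recall the paper itself notes the forward loop \eqref{y1Fy2v} is not identifiable. So your step ``$K(z)e(t)$ contributes nothing to that subspace by hypothesis'' is a non sequitur under that reading: for non-constant $K$, the lagged terms $K^{(j)}e(t-j)$, $j\ge 1$, are in general correlated with $y_2(t-1),y_2(t-2),\dots$. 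The projection argument works only if the hypothesis is taken to mean that the whole noise term $K(z)e(t)$ is orthogonal to $\Hb_{t-1}^-(y_2)$; you need to state this reading explicitly and observe that for the canonical representation the two readings coincide precisely because $K_+$ is constant. Second, your conclusion that this ``pins down $F(z)$ uniquely'' is false whenever $p>m$: then $\Phi_{22}=W_2W_2^*$ is singular, $W_2$ has a nontrivial strictly causal left annihilator, and $F$ can be perturbed by any such annihilator without changing the random variable $F(z)y_2(t)$ — indeed \cref{thmWiener} deliberately describes $F_+=[zW_1Q_2^*]_+G_2^{-L}$ as ``all solutions''. Uniqueness holds at the level of the orthogonal decomposition $y_1(t)=F_+y_2(t-1)+K_+e(t)$ (the random variables, and hence $K_+$ up to the usual equivalence), not at the level of the transfer function $F$, and your proof should be phrased accordingly.
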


\section{Proof of Lemma~\ref{thmUnique}}\label{apdxA.1}
\begin{proof}
A rational rank deficient spectral density corresponds to a stationary process (of dimension $m+p$ in our case). Since a stable $(m+p)\times m$ spectral factor always exists for such processes, the theorem can be proved by its equivalent statement: \\
{\it In all full-column-rank stable spectral factors of $\Phi$, there is one minimum-phase spectral factor unique up to right multiplication by an arbitrary $m\times m$ constant unitary matrix}.\\
Existence: Given any stable $p \times m$ spectral factor $W_o$ with rank $m$, we can perform an outer-inner factorization such that $W_o(z)=W(z)Q(z)$, where $Q(z)$ is an $m\times m$ inner function, to extract the minimum-phase factor of $W_o$.\\
Uniqueness: Suppose we have a minimum-phase $(m+p) \times m$ factor $W(z)$, 
and there is an square inner function $\hat{Q}(z)$, satisfying $\hat{W}(z)=W(z)\hat{Q}(z)$ is minimum-phase. Hence $\hat{Q}(z)$ must also be an outer function. Then by $\hat{Q}^{-1}(z)=\hat{Q}(z^{-1})^\top$, the function $\hat{Q}(z^{-1})$ is minimum-phase. Since the poles and zeros of $\hat{Q}(z^{-1})$ are the inverse of those of $\hat{Q}(z)$, $\hat{Q}(z)$ can only be a constant unitary matrix.
\end{proof}

\section{Uniqueness of the solution to some Sylvester matrix equation}\label{apdxB}
Though the two equations \eqref{Lyapct} and \eqref{Steindt} used in the spectral factorization in continuous time and discrete time are different, they are both Sylvester-type equations.

\begin{theorem}[\cite{Chu87}]\label{thmSylvUnique}
A Sylvester-type equation
$$
AXB^\top +CXD^\top =E
$$
has a unique solution if and only if (i) the matrix pencils $A-\lambda C$ and $D-\lambda B$ are regular; and \\
(ii) the spectrum of one (of the two pencils) is disjoint from the negative of the spectrum of the other. 
\end{theorem}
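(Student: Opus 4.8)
The plan is to reduce the matrix equation to an ordinary linear system and read off uniqueness from a triangularized Kronecker representation. (Condition (i) presupposes the pencils are square, so $A,C$ are $m\times m$, $B,D$ are $p\times p$, $X$ is $m\times p$, and the operator below is square of order $mp$.) Regard $\mathcal{L}(X):=AXB^\top+CXD^\top$ as a linear operator on the finite-dimensional space of matrices of the appropriate size. The equation $\mathcal{L}(X)=E$ has a unique solution for every $E$ if and only if $\mathcal{L}$ is a bijection, which in finite dimensions is equivalent to $\mathcal{L}$ being injective. Applying the vectorization identity $\operatorname{vec}(MXN)=(N^\top\otimes M)\operatorname{vec}(X)$ to each term gives
\[
  \operatorname{vec}\bigl(\mathcal{L}(X)\bigr)=\bigl(B\otimes A+D\otimes C\bigr)\operatorname{vec}(X),
\]
so the whole problem collapses to showing that the single matrix $\mathcal{M}:=B\otimes A+D\otimes C$ is nonsingular precisely when (i) and (ii) hold.

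Next I would triangularize the two pencils simultaneously. By the generalized Schur (QZ) decomposition there exist unitary matrices $Q_1,Z_1$ making $Q_1^*AZ_1=T_A$ and $Q_1^*CZ_1=T_C$ both upper triangular, and unitary $Q_2,Z_2$ making $Q_2^*DZ_2=T_D$ and $Q_2^*BZ_2=T_B$ both upper triangular; write the diagonal pairs as $(\alpha_i,\gamma_i)$ for $(T_A,T_C)$ and $(\delta_j,\beta_j)$ for $(T_D,T_B)$, so the generalized eigenvalues of $A-\lambda C$ and of $D-\lambda B$ are $\lambda_i=\alpha_i/\gamma_i$ and $\mu_j=\delta_j/\beta_j$ (finite when $\gamma_i$, resp.\ $\beta_j$, is nonzero and infinite otherwise). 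Conjugating by the unitary matrices $Q_2\otimes Q_1$ and $Z_2\otimes Z_1$ and using the mixed-product rule $(P\otimes R)(S\otimes T)=(PS)\otimes(RT)$ gives
\[
  (Q_2\otimes Q_1)^*\,\mathcal{M}\,(Z_2\otimes Z_1)=T_B\otimes T_A+T_D\otimes T_C,
\]
which is upper triangular with diagonal entries $\beta_j\alpha_i+\delta_j\gamma_i$; since the transforming matrices are unitary, $\mathcal{M}$ is nonsingular if and only if all these diagonal entries are nonzero.

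It then remains to match the nonvanishing of every factor $\beta_j\alpha_i+\delta_j\gamma_i$ with conditions (i)--(ii), arguing by cases on whether the denominator entries $\gamma_i,\beta_j$ vanish. If $\gamma_i\neq0$ and $\beta_j\neq0$, the factor vanishes exactly when $\lambda_i=-\mu_j$, i.e.\ a finite eigenvalue of $A-\lambda C$ equals the negative of a finite eigenvalue of $D-\lambda B$, which is the failure of (ii). If precisely one of $\gamma_i,\beta_j$ is zero, regularity of the associated pencil forces the paired numerator ($\alpha_i$ or $\delta_j$) to be nonzero, so the factor reduces to a single nonzero product and no obstruction arises (consistently, a finite eigenvalue can never equal the negative of an infinite one). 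The only remaining ways to produce a zero factor are $\alpha_i=\gamma_i=0$ or $\delta_j=\beta_j=0$ --- a singular pencil, i.e.\ the failure of (i) --- and $\gamma_i=\beta_j=0$ with both pencils carrying an eigenvalue at infinity, which is the failure of (ii) under the convention $-\infty=\infty$. Collecting the cases yields $\mathcal{M}$ nonsingular $\iff$ (i) and (ii), as required.

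I expect the delicate point to be the bookkeeping at infinity: one must phrase the generalized eigenvalues projectively as homogeneous pairs $(\alpha_i:\gamma_i)$ and $(\delta_j:\beta_j)$, and interpret ``the negative of the spectrum'' and the disjointness in (ii) on the Riemann sphere (with $-\infty=\infty$), so that the case $\gamma_i=\beta_j=0$ is correctly counted as a spectral coincidence rather than a separate exceptional condition. Establishing the QZ decomposition and the vectorization identity is routine; the genuine work is this uniform translation of the algebraic criterion $\beta_j\alpha_i+\delta_j\gamma_i\neq0$ into the stated regularity-plus-disjointness conditions.
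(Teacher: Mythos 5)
Your proof is correct, but there is nothing in the paper to compare it against: the paper states this result (its Theorem in Appendix C, labelled \texttt{thmSylvUnique}) as a quoted theorem from the reference [Chu87] and gives no proof at all. Your argument --- vectorizing via $\operatorname{vec}(MXN)=(N^\top\otimes M)\operatorname{vec}(X)$ to reduce unique solvability to nonsingularity of $B\otimes A+D\otimes C$, simultaneously triangularizing the pairs $(A,C)$ and $(D,B)$ by the QZ decomposition, and reading off the diagonal entries $\beta_j\alpha_i+\delta_j\gamma_i$ --- is essentially the classical proof, and it is also the route taken in the cited source, so in that sense you have reconstructed the ``missing'' proof rather than found an alternative one. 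The case analysis is sound: with regularity ruling out $(\alpha_i,\gamma_i)=(0,0)$ and $(\delta_j,\beta_j)=(0,0)$, a zero diagonal entry occurs exactly when two finite eigenvalues satisfy $\lambda_i=-\mu_j$ or when both pencils have an eigenvalue at infinity, and your projective convention $-\infty=\infty$ is precisely the interpretation under which the stated disjointness condition (ii) is equivalent to nonsingularity; this bookkeeping at infinity is indeed the only delicate point, and you handle it correctly.
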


\section{More details on \eqref{W2Q1dt} being able to represent all the minimum-phase factors}\label{apdxC}

In this section, we shall prove that \eqref{W2Q1dt} can represent all the minimum-phase factors (in discrete time), without referring to Theorem~\ref{thmUnique}.

When equation \eqref{Steindt} has a unique invertible solution,
from \cite[Theorem 6.2]{Oara99}, $W_2(z)^\top$ and $Q_1(z)^\top$ from \eqref{W2dt} and \eqref{Q1dt} can represent all the solutions to the left-coprime factorization of $G_1(z)^\top H(z)^\top$ with an inner denominator.
Moreover, recall that from Theorem~\ref{thmHW1}, $H(z)$ is unique regardless of specific factor.
Hence we have to prove that, any minimum-phase factor of $\Phi_{11}$ can lead to all the tall full-rank minimum-phase factors given a fixed partition.

Suppose $G_1$ and $\hat{G}_1$ are two minimum-phase factors of $\Phi_{11}$, satisfying $\hat{G}_1=G_1\hat{P}$, where $\hat{P}$ is an $m\times m$ unitary matrix.
Suppose $G_1^\top H^\top=(Q_1^\top)^{-1}W_2^\top$, with $W_2,~Q_1$ in \eqref{W2dt}\eqref{Q1dt}, is the left-coprime factorization of $G_1^\top H^\top$ with an inner factor. Then we have a left-coprime factorization
\begin{equation}\nonumber
\begin{split}
   \hat{G}_1^\top H^\top &= \hat{P}^\top{G}_1^\top H^\top
     = \hat{P}^\top (Q_1^\top)^{-1}W_2^\top\\
     &=((\hat{P}^{-1}Q_1)^\top)^{-1}W_2^\top,
\end{split}
\end{equation}
where $\hat{Q}_1:=\hat{P}^{-1}Q_1$ is obviously an inner matrix. Then a minimum-phase full-rank factor can be represented as
\begin{equation}\nonumber
\begin{split}
      &W(z)=\begin{bmatrix}
           \hat{G}_1(z)\hat{Q}_1(z) \\
           W_2(z)
         \end{bmatrix}\\
         &=\begin{bmatrix}
           {G}_1(z)\hat{P}\hat{P}^{-1}{Q}_1(z) \\
           W_2(z)
         \end{bmatrix}
         =\begin{bmatrix}
           G_1(z)Q_1(z) \\
           W_2(z)
         \end{bmatrix},
\end{split}
\end{equation}
showing that any $G_1$ leads to the same solutions of minimum-phase tall full-rank factors. Hence Theorem~\ref{thmSPct} gives all the full-rank spectral factors of \eqref{Phi}.

\section{A numerical example of calculating a minimal realization \eqref{minRlz} from $T(s)$}\label{apdxExp}
Suppose we have a rational function in the continuous-time case,
\begin{align}\nonumber
    T(s)=\begin{bmatrix}
        \frac{(s+1)^2(s+2)^2}{(s+4)^2(s^2-2s+2)}, & \frac{(s+1)(s+3)}{(s-1)(s+4)}
    \end{bmatrix}^\top
\end{align}
with unstable distinct poles $1$, $1+i$, $1-i$, and a stable repeated pole $-4$ of degree $2$.

First we rewrite $T(s)$ in a summation form, and obtain 
\begin{align}\nonumber
    T(s) = & \frac{1}{s-1-i}\begin{bmatrix} 25(12-5i)/{338}\\0 \end{bmatrix} + \frac{1}{s-1+i} \begin{bmatrix} {25(12+5i)}/{338}\\0 \end{bmatrix}
    +\frac{1}{s-1}\begin{bmatrix} 0\\ {8}/{5}\end{bmatrix} \\ \nonumber
    & +\frac{1}{(s+4)^2}\begin{bmatrix} -{6(50s+161)}/{169} \\-{3(s+4)}/{5} \end{bmatrix}
    + \begin{bmatrix}1\\1 \end{bmatrix}.
\end{align} 

For the first two items in the summation, which contain a pair of complex poles symmetric on the both sides of the real axis, we use the controllable standard form to calculate a small minimal realization of them. We have 
\begin{align}\nonumber
    T_1(s):=&\frac{1}{s-1-i}\begin{bmatrix} {25(12-5i)}/{338}\\0 \end{bmatrix} + \frac{1}{s-1+i} \begin{bmatrix} {25(12+5i)}/{338}\\0 \end{bmatrix} \\ \nonumber
    =& \frac{1}{s^2-2s+2}\begin{bmatrix}
        {25(12s-7)}/{169} \\ 0
    \end{bmatrix} =\left[\begin{array}{cc|c}
    0 &  1 & 0\\ 
    -2 & 2  & 1\\ \hline
    {-175}/{169} & {300}/{169}  & 0 \\
    0 & 0 & 0
    \end{array}\right].
\end{align} 
For the simplicity of solving the following Lyapunov equation \eqref{Lyapct},we rewrite $T_1(s)$ as
\begin{align}\nonumber
    T_1(s)=\left[\begin{array}{cc|c}
    0 &  1 & 0\\ 
    -2 & 2  & {25}/{169}\\ \hline
    -7 & 12  & 0 \\
    0 & 0 & 0
    \end{array}\right].
\end{align} 

For the third item containing a distinct real pole in the above summation, by Gilbert realization, we have 
\begin{align}\nonumber
    T_2(s):=\frac{1}{s-1}\begin{bmatrix} 0\\ {8}/{5}\end{bmatrix} = \left[\begin{array}{c|cc}
    1 & {8}/{5} \\ \hline 
    0 & 0 \\
    1 & 0
    \end{array}\right].
\end{align}

For the forth item in the summation, denote its realization by
\begin{align}\nonumber
    T_3(s) = \left[\begin{array}{c|c}
      A_3 & B_3 \\ \hline
      C_3 & \begin{bmatrix}
          0\\0
      \end{bmatrix}
    \end{array}\right].
\end{align}
Since the degree of the repeated pole $-4$ is $2$, we let $A_3$ be a $2\times 2$ Jordan block,
\begin{align}\nonumber
    A_3=\begin{bmatrix}
        -4 & 1 \\
        0 & -4
    \end{bmatrix}.
\end{align}
Then it is easy to obtain matrices $B_3$, $C_3$ by solving equations. One of the solutions is
\begin{align}\nonumber
    T_3(s)=\left[\begin{array}{cc|c}
    -4 & 1  & {10}/{13}\\ 
    0 & -4 & -{3}/{5}\\ \hline 
    -{30}/{13} & 0 & 0 \\
    0 & 1 & 0
    \end{array}\right].
\end{align} 

Finally, we have 
\begin{align}\label{exp_ctrlz}
\begin{split}
    T(s)&= T_1(s)+T_2(s)+T_3(s) +\begin{bmatrix}
        1 & 1
    \end{bmatrix}^\top \\
    &=\left[\begin{array}{ccccc|c}
        0 &  1 & 0 & 0 & 0   & 0\\
        -2 & 2 & 0 & 0 & 0   & {25}/{169}\\
        0 & 0 & 1 & 0 & 0   & {8}/{5}\\
        0 & 0 & 0 & -4 & 1   &  {10}/{13}\\
        0 & 0& 0 & 0 & -4   & -{3}/{5}\\ \hline
        -7 & 12 & 0 & -{30}/{13} & 0 & 1 \\
        0 & 0 & 1 & 0 & 1 & 1
    \end{array}\right].
\end{split}
\end{align}

\bibliographystyle{siamplain}
\bibliography{CL23ver3}
\end{document}